\documentclass[reqno, 10pt]{amsart}
\usepackage{amsmath, amssymb}




\newtheorem{thm}{Theorem}[section]
 \newtheorem{lem}[thm]{Lemma}
 \newtheorem{prop}[thm]{Proposition}
 \newtheorem{cor}[thm]{Corollary}
\theoremstyle{remark}
\newtheorem{rem}[thm]{Remark}
%

%
%
%
%
%

\begin{document}


\title[Euler-Poincar\'e equations]{On the Euler-Poincar\'e equation with
non-zero dispersion}

\author[D. Li]{Dong Li}
\address[D. Li]
{Department of Mathematics, University of British Columbia, 1984 Mathematics Road,
Vancouver, BC, Canada V6T1Z2 and \\
School of Mathematics, Institute for Advanced Study, 1st Einstein Drive, Princeton NJ, USA 08544}
\email{dli@math.ubc.ca}

\author[X. Yu]{Xinwei Yu}
\address[X. Yu, Z. Zhai]{Department of mathematical and Statistical Sciences, University of Alberta, Edmonton, AB, T6G 2G1, Canada}
 \email{ xinweiyu@math.ualberta.ca, zhichun1@ualberta.ca}
\author[Z. Zhai]{Zhichun Zhai}

\subjclass{35Q35}

\keywords{Euler-Poincar\'e equations, blowup.}

\begin{abstract}
We consider the Euler-Poincar\'e equation on $\mathbb R^d$, $d\ge 2$.
For a large class of smooth initial data we prove that the
corresponding solution blows up in finite time. This settles an open
problem raised by Chae and Liu \cite{Chae Liu}. Our analysis
exhibits some new concentration mechanism and hidden monotonicity
formula associated with the Euler-Poincar\'e flow. In particular we
show the abundance of blowups emanating from smooth initial data with
certain sign properties. No size restrictions are imposed on the
data. We also showcase a class of initial data for which the
corresponding solution exists globally in time.
\end{abstract}
\maketitle
\section{Introduction}
We consider the following Euler-Poincar\'e equation on $\mathbb R^d$,
$d\ge 2$:
\begin{align} \label{1}
\begin{cases}
\partial_t m + (u\cdot \nabla) m + (\nabla u)^{T} m +(\text{div} u) m=0, \qquad t>0,
\; x \in \mathbb R^d;\\
m=(1-\alpha \Delta) u; \\
u(0,x) = u_0(x), \quad x \in \mathbb R^d.
\end{cases}
\end{align}
Here $u=(u_1,\cdots,u_d):\,\mathbb R^d \to \mathbb R^d$ represents
the velocity and $m=(m_1,\cdots,m_d):\, \mathbb R^d \to \mathbb R^d$
denotes the momentum. The parameter $\alpha>0$ in the second
equation of \eqref{1} corresponds to the square of the length scale.
It is sometimes called the dispersion parameter in the literature.
The notation $(\nabla u)^T$ denotes the transpose of the matrix
$\nabla u$. To avoid any confusion it is useful to recast equation
\eqref{1} in the component-wise form as
\begin{align} \label{1a}
 \partial_t m_i + u_j \partial_j m_i + (\partial_i u_j) m_j+ (\partial_j u_j) m_i =0.
\end{align}
Here and throughout the rest of this paper we shall use the Einstein summation convention. By using the tensor notation,
one can combine the second and the last term in \eqref{1a} and write it more compactly as
\begin{align} \label{1b}
 \partial_t m + \nabla \cdot ( m \otimes u) + (\nabla u)^{T} m=0.
\end{align}
The last term in \eqref{1b} is not in conservative form. Following Chae and Liu \cite{Chae Liu} (see formula (1)--(4) on page 673 therein),
one can introduce a stress-tensor $T_{ij}$
\begin{align*}
 T_{ij} = m_i u_j + \frac 12 \delta_{ij} |u|^2 - \alpha \partial_i u \cdot \partial_j u + \frac 12 \alpha \delta_{ij} |\nabla u|^2
\end{align*}
and rewrite \eqref{1b} as
\begin{align} \label{1c}
 \partial_t m_i +  \partial_j T_{ij}=0.
\end{align}
By the second equation in \eqref{1}, we have
\begin{align*}
m_i u_j &= u_i u_j - \alpha (\partial_{kk} u_i) u_j \notag \\
& = u_i u_j - \alpha \partial_k( (\partial_k u_i) u_j) + \alpha (
\partial_k u_i \partial_k u_j).
\end{align*}

Therefore the tensor $T_{ij}$ can be rewritten as
\begin{align}
 T_{ij} &=u_i u_j - \alpha \partial_k( (\partial_k u_i) u_j) + \alpha (
\partial_k u_i \partial_k u_j) \notag \\
&\quad  + \frac 12 \delta_{ij} |u|^2
 - \alpha \partial_i u \cdot \partial_j u + \frac 12 \alpha \delta_{ij} |\nabla u|^2
\label{1d}
\end{align}

Roughly speaking, the above expressions show that the tensor $T$ is of the form
\begin{align*}
 T= O(|u|^2 + |\partial u|^2 + \partial(u \partial u)).
\end{align*}
Such a decomposition is very useful in deriving low frequency $L^p$
estimates later (cf. Proposition \ref{prop1}). For smooth solutions
with enough spatial decay, there are two natural conservation laws
\begin{align}
&\frac d {dt} \int_{\mathbb R^d} m dx =0, \notag \\
&\frac d {dt} \int_{\mathbb R^d} (|u|^2 + \alpha |\nabla u|^2) dx
=0. \label{conserv}
\end{align}
We shall only need the second one for later constructions.

The Euler-Poincar\'e equations were first introduced by Holm, Marsden,
and Ratiu in \cite{Holm Marsden Ratiu1, Holm Marsden Ratiu2}. In 1D
($d=1$) the Euler-Poincar\'e equations reduce to the Camassa-Holm
equations of the form
\begin{align*}
 \partial_t m + u \partial_x m + 2 \partial_x u m=0,\; m=(1-\alpha \partial_{xx})u.
\end{align*}
The well-posedness of local and global weak solutions of Camassa-Holm equations have been intensively studied
(see \cite{Molinet} and references therein).  In 2D, the Euler-Poincar\'e equation is known as the averaged template matching equation in the computer vision literature \cite{Hirani Marsden Arvo,Holm Marsden,  Holm Ratnanather Trouve Younes}. For the applications of
Euler-Poincar\'e equations in computational anatomy, see \cite{Holm Schmah Stoica, Younes}. The rigorous analysis of
the Euler-Poincar\'e equations was initiated by Chae and Liu \cite{Chae Liu}
who established a fairly complete wellposedness theory for
both weak and strong solutions. We summarize some of their main results (relevant to our context)
as follows (here $\alpha$ is the dispersion parameter in the second equation of \eqref{1}):
\begin{itemize}
\item Let $\alpha\ge 0$ and $u_0\in H^k(\mathbb R^d)$ with $k>\frac{d}{2}+3.
$\footnote{For $\alpha=0$ one only needs $k>\frac d2+1$ since the
corresponding system is a symmetric hyperbolic system of
conservation laws with a convex entropy, see Theorem $1$ in
\cite{Chae Liu} for more details.} Then, there exists
$T=T(\|u_0\|_{H^k})>0$ and a unique classical solution $u=u(x,t)$ to
\eqref{1} in the space $ C([0,T), H^k(\mathbb R^d))$.
\item Let $0<T^*\le +\infty$ be the  maximal lifespan corresponding to the solution $u \in C_t^0 H^k$. If $T^*<\infty$, then
\begin{align} \label{2}
\limsup_{t\longrightarrow T^*}\|u(t)\|_{H^k}=\infty\Longleftrightarrow\int_0^{T^*}\|S(t)\|_{\dot{B}^0_{\infty,\infty}}dt=\infty.
\end{align}
Here $S=(S_{ij})$ is the deformation tensor of $u$ with $S_{ij}=\frac{1}{2}(\partial_iu_j+\partial_ju_i).$ See \eqref{besov} for the definition
of the homogeneous Besov norm $\|\cdot \|_{\dot B^0_{\infty,\infty}}$.

\item Let $\alpha=0.$ Let  $u_0\in H^k(\mathbb R^d),$ $k>\frac{d}{2}+2$, and has the reflection symmetry with respect to the origin, i.e.
\begin{align*}
 u_0(x) = - u_0(-x), \qquad \forall\, x \in \mathbb R^d.
\end{align*}
If $\hbox{div}u_0(0)<0$, then the corresponding classical solution blows up in finite time.
\end{itemize}

One should notice that the Chae-Liu blowup result stated above is only valid for $\alpha=0$ in which case the Euler-Poincar\'e equation
reduces to a version of high-dimensional Burgers system. The main idea of Chae-Liu is to consider the evolution of $\hbox{div} u$ at the origin.
Namely by using the reflection symmetry and \eqref{1a}, one obtains
\begin{align*}
 \frac d {dt} (\hbox{div} u(0,t) )& = -2 \Bigl( \sum_{i,j=1}^d (\partial_i u_j + \partial_j u_i) \Bigr)^2 - (\hbox{div} u(0,t))^2 \\
 & \le - (\hbox{div} u(0,t))^2,
\end{align*}
and blowup follows from the assumption $\hbox{div} u_0(0)<0$.
Unfortunately for the non-degenerate case $\alpha>0$, this elegant
argument does not work anymore due to some extra high order terms
which do not enjoy any monotonicity property. Thus Chae and Liu
raised the following

\textbf{Problem: } \emph{for the Euler-Poincar\'e system \eqref{1} ($\alpha>0$),
do there exist finite time blowups from smooth initial data?}

The main purpose of this paper is to settle the above problem in the
affirmative. Since we are mainly interested in the case $\alpha>0$,
the actual value of $\alpha$ will play no role in our analysis.
Henceforth we shall set $\alpha=1$ throughout the rest of this
paper. We start by considering a special class of \emph{radial}
flows invariant under the Euler-Poincar\'e dynamics. More precisely
let $m=\nabla \phi$ where $\phi$ is a radial scalar-valued
function.\footnote{By using the derivation below, it is not
difficult to check that if initially $m_0=\nabla \phi_0$ and
$\phi_0$ is a smooth radial function, then for any $t>0$ we can
write $m(t)=\nabla \phi(t)$ with $\phi(t)$ being radial and smooth
as well. The radial assumption here is essential. In the general
case one can not expect that irrotational flows are preserved in
time.} By \eqref{1a} and noting that $\partial_j m_i =\partial_i
m_j$ for any $i$, $j$, we have
\begin{align} \label{100}
- \partial_t m_i  & = m_i \partial_j u_j + u_j \partial_j m_i + \partial_i u_j m_j \notag \\
&= m_i \partial_j u_j + u_j \partial_i m_j+ \partial_i u_j m_j  \notag \\
&= m_i (\nabla \cdot u) + \partial_i (m \cdot u).
\end{align}
Therefore the radial function $\phi$ satisfies
\begin{align} \label{102}
- \partial_t \phi^{\prime}(r,t)
& = - \phi(r,t) \phi^{\prime}(r,t) +  \phi^{\prime}(r,t) ((1- \Delta)^{-1} \phi)(r,t) \nonumber\\
&+
 \Bigl( (1- \Delta)^{-1} \nabla  \phi \cdot \nabla \phi \Bigr)^{\prime},\quad r=|x|>0,
\end{align}
with initial data $\phi(r,0)=\phi_0(r).$ Here and throughout the rest
of this paper, we will slightly abuse the notation and denote any
radial function $f$ on $\mathbb R^d$ as $f(x)=f(|x|) =f(r)$ whenever
there is no confusion. We also use the notation
$f^{\prime}=f^{\prime}(r)$ to denote the radial derivative. Assuming
$\phi$ (and its derivatives) decays sufficiently fast at infinity,
we may integrate \eqref{102} on the slab $[r,\infty)$ and obtain
\begin{align}
\partial_t \phi(r,t) &= \frac 12 \phi(r,t)^2 + \int_r^{\infty}
\phi^{\prime}(s,t) ( (1-\Delta)^{-1} \phi)(s,t) ds \notag \\
&\qquad -\Bigl( (1-\Delta)^{-1} \nabla  \phi \cdot \nabla \phi
\Bigr)(r,t). \label{102a}
\end{align}
At the cost of a nonlocal integration, the equation \eqref{102a}
simplifies greatly the analysis and will be our main object of study
in this paper. We begin with a simple proposition which in some
sense justifies the validity of the equation \eqref{102a}.

\begin{prop} \label{prop1}
Let the dimension $d\ge 2$. Assume initially $m_0= \nabla \phi_0$
where $\phi_0$ is a radial function on $\mathbb R^d$ and $\phi_0 \in
H^k$ for some $k>\frac d2+4$. If $d=2$, we also assume $\phi_0 \in
\dot B^0_{1,\infty}(\mathbb R^2)$. Then for any $t>0$ the solution
$m(t)=(1-\Delta) u(t)$ can be written as $m(t)=\nabla \phi(t)$ where
$\phi(t)$ is radial and $\phi(t) \in H^k(\mathbb R^d)$ for $d\ge 3$,
$\phi(t) \in H^k(\mathbb R^2) \cap \dot B^0_{1,\infty}(\mathbb R^2)$
for $d=2$. Each $\phi(t,r)$ solves \eqref{102a} in the classical
sense. Moreover we have the growth estimate
\begin{align}
\|\phi(t,\cdot)\|_{L^2(\mathbb R^d)} & \le B_1 \cdot
(1+t)^{\frac12}, \;
\forall\ t\ge 0, \; \text{if $d=2$}, \label{hk_1} \\
\| \phi(t,\cdot) \|_{L^2(\mathbb R^d)} & \le B_2 \cdot (1+t),\;
\forall\, t\ge 0, \; \text{if $d\ge 3$}. \label{hk_2}
\end{align}
Here $B_1>0$, $B_2>0$ are some constants depending only on the
initial data $\phi_0$.
\end{prop}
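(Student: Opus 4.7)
The statement splits into three sub-claims: preservation of the radial-gradient structure, classical validity of \eqref{102a}, and the growth bounds \eqref{hk_1}--\eqref{hk_2}.

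First I would verify the structural part. From $\phi_0 \in H^k$ with $k > d/2+4$, the velocity datum $u_0 = (1-\Delta)^{-1}\nabla\phi_0 \in H^{k+1}$ is a radial vector field, so Chae-Liu produces a unique local solution $u \in C^0_t H^{k+1}$. Rotational invariance of \eqref{1} together with uniqueness keeps $u(t)$ radial, and the derivation \eqref{100} already exhibits $\partial_t m$ as a full gradient, so $m(t)=\nabla\phi(t)$ with $\phi(t)$ radial and $\nabla\phi(t) = (1-\Delta)u(t) \in H^{k-1}$. Integrating $-\partial_t\phi'(r,t)$ from $r$ to $\infty$ using the decay of $\phi$ and its derivatives recovers \eqref{102a} in the classical sense.

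For the growth bound I would split $\phi$ by frequency. Since $\hat m = i\xi\hat\phi = (1+|\xi|^2)\hat u$, on $\{|\xi|\ge 1\}$ one has $|\hat\phi|\lesssim |\xi||\hat u|$, hence $\|P_{>1}\phi(t)\|_{L^2}\lesssim \|\nabla u(t)\|_{L^2}\le \|u_0\|_{H^1}$ uniformly in $t$ by \eqref{conserv}; thus only the low frequencies can grow. To handle them, I would return to the conservation form \eqref{1c} and invert the gradient, obtaining $\partial_t\phi = -\Delta^{-1}\partial_i\partial_j T_{ij}$. The point of the decomposition \eqref{1d} is that $T_{ij} = A_{ij} + \partial_k B_{kij}$ with $|A|\lesssim |u|^2+|\nabla u|^2$ and $|B|\lesssim |u||\nabla u|$, so Cauchy-Schwarz and \eqref{conserv} give $\|A(t)\|_{L^1}+\|B(t)\|_{L^1}\le C$ uniformly. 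The operators $P_{\le 1}\Delta^{-1}\partial_i\partial_j$ and $P_{\le 1}\Delta^{-1}\partial_i\partial_j\partial_k$ are compactly Fourier-supported, so they have Schwartz-class kernels and map $L^1\to L^2$. This yields $\|P_{\le 1}\partial_t\phi(t)\|_{L^2}\le C$ uniformly, and time integration gives $\|P_{\le 1}\phi(t)\|_{L^2}\le C(1+t)$; combined with the high-frequency bound above this proves \eqref{hk_2} for $d\ge 3$.

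The sharper $(1+t)^{1/2}$ bound \eqref{hk_1} in $d=2$ is the main obstacle. The direct time-integration above only yields the weaker linear growth, and the extra hypothesis $\phi_0 \in \dot B^0_{1,\infty}(\mathbb R^2)$ is designed precisely to improve it. My plan is to close an energy identity $\tfrac{d}{dt}\|\phi\|_{L^2}^2 = 2\langle\phi,\partial_t\phi\rangle$ and estimate the low-frequency pairing by Hölder's inequality in the form $L^\infty\times L^1$, using a propagated bound on $\|P_{\le 1}\phi(t)\|_{L^\infty}$ that comes from the $\dot B^0_{1,\infty}$ control of the data together with the same $L^1$ estimates for $A$ and $B$. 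The delicate step is to show that the $\dot B^0_{1,\infty}$-norm of $\phi(t)$ is propagated uniformly in time without any size restriction on the data; once this is in place the identity reduces to $\tfrac{d}{dt}\|\phi\|_{L^2}^2 \le C$, yielding \eqref{hk_1}.
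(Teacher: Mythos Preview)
Your treatment of the structural preservation, the high-frequency bound, and the low-frequency $L^2$ estimate for $d\ge 3$ matches the paper's argument essentially line for line. The gap is in the $d=2$ case.

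Your plan hinges on propagating $\|\phi(t)\|_{\dot B^0_{1,\infty}}$ \emph{uniformly} in time, but the mechanism you name---``the same $L^1$ estimates for $A$ and $B$''---only produces linear growth: for each dyadic $N$, the multiplier $P_N\Delta^{-1}\partial_i\partial_j$ is bounded on $L^1$, so $\|P_N\partial_t\phi\|_{L^1}\lesssim \|T\|_{L^1}\lesssim \|u_0\|_{H^1}^2$, and time integration gives $\|P_N\phi(t)\|_{L^1}\le \|P_N\phi_0\|_{L^1}+Ct$. With only $\|\phi(t)\|_{\dot B^0_{1,\infty}}\lesssim 1+t$, your energy identity yields $\frac{d}{dt}\|\phi\|_{L^2}^2\lesssim 1+t$, which integrates to $\|\phi\|_{L^2}\lesssim 1+t$---no improvement over the direct argument. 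There is no evident cancellation that would upgrade the $\dot B^0_{1,\infty}$ bound to uniform.

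The paper avoids this by not pursuing an energy identity at all. It accepts the linear growth $\|P_{<1}\phi(t)\|_{\dot B^0_{1,\infty}}\lesssim 1+t$ and pairs it with a second piece of information you have not used: the conservation law \eqref{conserv} forces $\|\nabla P_{<1}\phi(t)\|_{L^2}\lesssim \|P_{<1}(1-\Delta)u(t)\|_{L^2}\lesssim \|u(t)\|_{L^2}\lesssim 1$ uniformly. The two bounds are then combined through the interpolation inequality
\[
\|f\|_{L^2(\mathbb R^2)}\lesssim \|f\|_{\dot B^0_{1,\infty}(\mathbb R^2)}^{1/2}\,\|\nabla f\|_{L^2(\mathbb R^2)}^{1/2},
\]
applied to $f=P_{<1}\phi(t)$, which immediately gives $\|P_{<1}\phi(t)\|_{L^2}\lesssim (1+t)^{1/2}$. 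The square-root gain thus comes from interpolation against a conserved $\dot H^1$-type quantity, not from any improved propagation of the $\dot B^0_{1,\infty}$ norm itself.
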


With Proposition \ref{prop1}  in hand, we can control the low frequency part of the solution and
express the blowup/continuation
\eqref{2} in terms of the scalar function $\phi$ alone. Thus

\begin{lem} \label{lem2new}
Let $\phi_0$ be radial. If $d\ge 3$, we assume $\phi_0 \in H^k(\mathbb R^d)$ for some $k>\frac d2+4$.
If $d=2$, we assume $\phi_0 \in H^k(\mathbb R^2) \cap \dot B^0_{1,\infty}(\mathbb R^2)$ for some $k \ge 6$.
Let $u$ be the maximal lifespan solution corresponding to initial data $u_0 = (1-\Delta)^{-1} m_0
= (1-\Delta)^{-1} \nabla \phi_0$. If the maximal lifespan $T^*<\infty$, then

\begin{align} \notag
\limsup_{t\longrightarrow T^*}\|u(t)\|_{H^k}=
\infty\Longleftrightarrow\int_0^{T^*}\| \phi(t)\|_{L^{\infty}(\mathbb R^d) }dt=\infty.
\end{align}

\end{lem}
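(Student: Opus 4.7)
The plan is to reduce the stated equivalence to the Chae--Liu criterion \eqref{2} by sandwiching the quantities $\|\phi(t)\|_{L^\infty}$ and $\|S(t)\|_{\dot B^0_{\infty,\infty}}$, modulo a lower-order term $\|\phi(t)\|_{L^2}$ which Proposition \ref{prop1} shows grows at most linearly in $t$ and therefore integrates to a finite quantity on any $[0,T^*]$ with $T^*<\infty$. Concretely, the two pointwise-in-time inequalities I would aim for are
\begin{align*}
 \|S(t)\|_{\dot B^0_{\infty,\infty}} &\lesssim \|\phi(t)\|_{L^\infty} + \|\phi(t)\|_{L^2}, \\
 \|\phi(t)\|_{L^\infty} &\lesssim \|u(t)\|_{H^k} + \|\phi(t)\|_{L^2}.
\end{align*}
Combined with \eqref{2} and the $L^2$ growth bounds \eqref{hk_1}--\eqref{hk_2}, these immediately yield both directions of the claimed equivalence.

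For the first inequality I would use that $u=(1-\Delta)^{-1}\nabla\phi$, so each entry of $S$ is of the form $T_{ij}\phi$ where $T_{ij}=\partial_i\partial_j(1-\Delta)^{-1}$ has Fourier symbol $-\xi_i\xi_j/(1+|\xi|^2)$. A smooth cutoff $\chi$ supported near the origin splits this symbol into (i) a low-frequency piece $\chi(\xi)\xi_i\xi_j/(1+|\xi|^2)$, smooth and compactly supported, hence with a Schwartz convolution kernel whose convolution with $\phi$ maps $L^2\to L^\infty\hookrightarrow \dot B^0_{\infty,\infty}$ with a universal constant; and (ii) a high-frequency piece, which away from $0$ differs from the smooth homogeneous degree-zero symbol $-\xi_i\xi_j/|\xi|^2$ by a correction that decays like $|\xi|^{-2}$, and is therefore bounded on $\dot B^0_{\infty,\infty}$ by the Littlewood--Paley characterisation together with standard boundedness of Riesz-type multipliers on the Besov scale. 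Using the trivial embedding $L^\infty \hookrightarrow \dot B^0_{\infty,\infty}$ then gives the desired bound.

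For the second inequality, since $\nabla\phi=m=(1-\Delta)u$ one has $\|\nabla\phi\|_{H^{s-1}}\lesssim \|u\|_{H^{s+1}}$, so picking any integer $s$ with $d/2<s$ and $s+1\le k$ (possible since $k>d/2+4$ in $d\ge3$ and $k\ge6$ in $d=2$), the Sobolev embedding $H^s\hookrightarrow L^\infty$ yields $\|\phi\|_{L^\infty}\lesssim \|\phi\|_{L^2}+\|u\|_{H^k}$. Integrating both estimates over $[0,T^*)$ and invoking \eqref{2} then closes the argument: if $\int_0^{T^*}\|\phi\|_{L^\infty}dt<\infty$ then the first estimate together with Proposition \ref{prop1} gives $\int_0^{T^*}\|S\|_{\dot B^0_{\infty,\infty}}dt<\infty$, hence $\|u\|_{H^k}$ stays bounded; conversely a bound on $\|u\|_{H^k}$ combined with the second estimate gives uniform $L^\infty$ control of $\phi$ on the finite interval $[0,T^*)$.

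The main obstacle I anticipate is rigorously justifying the boundedness of the high-frequency Riesz-type piece on the endpoint space $\dot B^0_{\infty,\infty}$: on this space a generic zero-order Fourier multiplier is \emph{not} bounded, so one must exploit the smoothness and the specific structure of the symbol via a Mihlin- or Littlewood--Paley-type argument and check that the $|\xi|^{-2}$ tail produces an $L^\infty$-controllable contribution after cutoff. A secondary, more bookkeeping issue is that Proposition \ref{prop1} gives slightly different $L^2$ growth rates in $d=2$ versus $d\ge 3$; both are sub-linear and hence integrable over any finite interval, and the additional $\dot B^0_{1,\infty}$ hypothesis in the 2D case enters precisely because it is what Proposition \ref{prop1} requires to produce the $L^2$ bound.
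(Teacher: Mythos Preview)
Your proposal is correct and follows essentially the same route as the paper, which in fact declines to write out a proof and simply notes that the lemma follows from \eqref{2}, the $L^2$ growth bounds \eqref{hk_1}--\eqref{hk_2}, and the embedding $L^\infty\hookrightarrow \dot B^0_{\infty,\infty}$.

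One remark: your ``main obstacle'' is not actually an obstacle. Mihlin-type Fourier multipliers are bounded on \emph{every} Besov space $\dot B^s_{p,q}$, including the endpoint $\dot B^0_{\infty,\infty}$, because on each Littlewood--Paley block the localized convolution kernel has uniformly bounded $L^1$ norm. (It is only on $L^\infty$ itself, not on $\dot B^0_{\infty,\infty}$, that such operators fail.) Moreover, the symbol $-\xi_i\xi_j/(1+|\xi|^2)$ is smooth even at the origin, so the low-frequency splitting you propose is unnecessary: one has directly
\[
\|S(t)\|_{\dot B^0_{\infty,\infty}}\lesssim \|\phi(t)\|_{\dot B^0_{\infty,\infty}}\le \|\phi(t)\|_{L^\infty},
\]
and the $L^2$ bounds from Proposition~\ref{prop1} are needed only for the reverse direction.
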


We shall omit the proof of Lemma \ref{lem2new} since it follows directly from \eqref{2},
\eqref{hk_1}--\eqref{hk_2}, and the embedding $L^{\infty}\hookrightarrow \dot B^0_{\infty,\infty}$.

We now state our main results. Apart from regularity assumptions, the first result says that
if initially $\phi_0(0)\ge 0$, then the corresponding solution blows up in finite time. It is a bit
surprising in that such a local condition dictates the whole nonlocal Euler-Poincar\'e dynamics.

\begin{thm} \label{thm_blowup_main}
Let the dimension $d\ge 2$. Let $\phi_0$ be a radial real-valued function on $\mathbb R^d$ such that
$\phi_0 \in H^k(\mathbb R^d)$ for some $k>\frac d2+4$. For $d=2$ we also assume $\phi_0 \in
\dot B^0_{1,\infty}(\mathbb R^2)$. Let the initial velocity $u_0 =(1-\Delta)^{-1} m_0 = (1-\Delta)^{-1} \nabla \phi_0$.
If $\phi_0(0)\ge 0$ and $\phi_0$ is not identically zero, then the corresponding solution blows up
in finite time.
\end{thm}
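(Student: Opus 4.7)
The plan is to exploit the special structure of equation \eqref{102a} at the origin $r=0$ and to extract a hidden sign identity from it. Since $\phi(\cdot,t)$ is smooth and radial, both $\phi'(0,t)=0$ and $g'(0,t)=0$, where $g(\cdot,t):=(1-\Delta)^{-1}\phi(\cdot,t)$. Evaluating \eqref{102a} at the origin therefore kills the local term $((1-\Delta)^{-1}\nabla\phi\cdot\nabla\phi)(0,t) = g'(0)\phi'(0) = 0$, leaving
\begin{align*}
\partial_t \phi(0,t) = \frac{1}{2}\phi(0,t)^2 + \int_0^\infty \phi'(s,t)\, g(s,t)\,ds.
\end{align*}
My first step is to rewrite this nonlocal integral in manifestly sign-definite form. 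Integrating by parts in $s$, and then using the radial identity $\phi = g - g'' - \frac{d-1}{r}g'$ to eliminate $\phi$ in favor of $g$ (with $g(\infty)=g'(0)=0$), I expect to arrive at the \emph{hidden monotonicity formula}
\begin{align*}
\partial_t \phi(0,t) = \frac{1}{2}(\phi(0,t) - g(0,t))^2 + (d-1)\int_0^\infty \frac{g'(s,t)^2}{s}\,ds.
\end{align*}
Both terms on the right are non-negative; the second is strictly positive whenever $\phi(\cdot,t)\not\equiv 0$, because then $g$ cannot be constant and the radial decay at infinity forces $g'\not\equiv 0$.

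The immediate consequence is that $t\mapsto\phi(0,t)$ is non-decreasing on $[0,T^*)$. Under the hypothesis $\phi_0(0)\ge 0$ we get $\phi(0,t)\ge \phi_0(0)\ge 0$ for all admissible $t$, and if moreover $\phi_0\not\equiv 0$ then $\partial_t\phi(0,0)>0$ strictly, so by restarting the flow from a small positive time one may assume without loss of generality that $\phi_0(0)>0$.

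Next I would convert this monotonicity into finite-time blowup by comparing $\phi(0,t)$ with $g(0,t)$ to produce a Riccati-type ODE. The growth estimate from Proposition \ref{prop1} gives $\|\phi(t)\|_{L^2}\le C(1+t)$; combined with the fact that the Bessel kernel of $(1-\Delta)^{-1}$ lies in $L^q(\mathbb R^d)$ for every $q\in[1,d/(d-2))$ (in particular in $L^2$ for $d\le 3$), one extracts an a priori bound $|g(0,t)|\le C(1+t)^\gamma$ for some dimensional exponent $\gamma$. The idea is then to bootstrap: the strict positivity of the dispersion term $(d-1)\int_0^\infty g'(s,t)^2/s\,ds$ at $t=0$ and its monotone effect on $\phi(0,\cdot)$ should eventually force $\phi(0,t)\ge 2|g(0,t)|$, at which point the identity degenerates into $\partial_t\phi(0,t)\ge \frac{1}{8}\phi(0,t)^2$, which blows up in finite time. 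Lemma \ref{lem2new} then promotes the blowup of $\phi(0,t)$ (and hence of $\|\phi(t)\|_{L^\infty}$) into blowup of $\|u(t)\|_{H^k}$, giving $T^*<\infty$.

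The hardest step will be the bootstrap itself: matching the monotone growth of $\phi(0,t)$ against the at-most-polynomial growth of $g(0,t)$, especially in high dimensions where the Bessel kernel is more singular and the naive $L^2$ control weakens. This is where I expect the \emph{concentration mechanism} alluded to in the abstract to enter — presumably a refined lower bound on the dispersion term $(d-1)\int_0^\infty g'(s,t)^2/s\,ds$ in terms of $\phi(0,t)$ itself, reflecting that a large value of $\phi$ at the origin forces $\phi$ to concentrate there and thereby separates it from the averaged quantity $g(0,t)$ — closing the loop between the monotonicity identity and a genuine quadratic lower bound on $\partial_t\phi(0,t)$.
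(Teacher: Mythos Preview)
Your derivation of the monotonicity identity
\[
\partial_t \phi(0,t) \;=\; (d-1)\int_0^\infty \frac{g'(s,t)^2}{s}\,ds \;+\; \frac{1}{2}\bigl(\phi(0,t)-g(0,t)\bigr)^2
\]
is exactly right and matches the paper. The gap is in the second half: your proposed bootstrap does not close, and you already suspect this. Bounding $|g(0,t)|$ directly via $\|K\|_{L^2}\|\phi(t)\|_{L^2}$ fails outright for $d\ge 4$ (the Bessel kernel is not in $L^2$), and even for $d\le 3$ the resulting bound $|g(0,t)|\le C(1+t)^\gamma$ is useless: nothing in the identity forces $\phi(0,t)$ to outpace a polynomial, since if $\phi(0,t)\approx g(0,t)$ and the dispersion integral happens to be small, $\partial_t\phi(0,t)$ can stay small indefinitely. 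There is no mechanism in your sketch that rules this scenario out.

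The paper's missing idea, which replaces your vague ``concentration mechanism,'' is to \emph{combine} the two non-negative terms instead of treating them separately. By the Fundamental Theorem of Calculus and Cauchy--Schwarz,
\[
|g(0,t)-g(R,t)| \;\le\; \int_0^R |g'|\,dr \;\le\; R\,\Bigl(\int_0^R \frac{(g')^2}{r}\,dr\Bigr)^{1/2},
\]
so for any $R>1$ the right-hand side of the identity dominates $cR^{-2}\bigl(\phi(0,t)-g(R,t)\bigr)^2$. The point is that $g(R,t)$ at \emph{large} $R$ is controlled by radial Sobolev in terms of the \emph{conserved} quantity $\|\nabla g(t)\|_2=\|u(t)\|_2=\|u_0\|_2$: for $d\ge 3$ one has $|g(R,t)|\lesssim R^{-(d-2)/2}\|u_0\|_2$, uniformly in $t$. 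Picking $R$ large once and for all makes $|g(R,t)|\le \tfrac{1}{2}A_1\le \tfrac{1}{2}\phi(0,t)$, and the genuine Riccati inequality $\partial_t\phi(0,t)\ge \epsilon\,\phi(0,t)^2$ follows. In $d=2$ one takes $R(t)=R_0(1+t)^{1/2}$ and uses the sharper growth bound \eqref{hk_1}; this yields $\partial_t\phi(0,t)\ge c(1+t)^{-1}\phi(0,t)^2$, which still blows up in finite time. The essential trick you are missing is trading $g(0,t)$ for $g(R,t)$ at the cost of the dispersion term, so that the relevant comparison is against a conserved energy rather than the growing $\|\phi(t)\|_{L^2}$.
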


The next result deals with the opposite scenario $\phi_0(0)<0$. Under the assumption that $\phi_0(r)$ is monotonically
increasing, we show the corresponding solution exists globally in time. In some sense it reveals the nonlinear depletion
 mechanism hidden in the Euler-Poincar\'e dynamics.

\begin{thm}[Global regularity for a class of non-positive monotone data] \label{thm2}
Let the dimension $d\ge 2$. Let $\phi_0$ be a radial real-valued function on $\mathbb R^d$ such that
$\phi_0 \in H^k(\mathbb R^d)$ for some $k>\frac d2+4$. For $d=2$ we also assume $\phi_0 \in
\dot B^0_{1,\infty}(\mathbb R^2)$. If $\phi_0(0)\le 0$ and $\phi_0$ is monotonically increasing on $[0,\infty)$
(i.e. $\phi_0^{\prime}(r)\ge 0$ for any $0\le r<\infty$), then the corresponding solution $u(t)=(1-\Delta)^{-1} \nabla \phi(t)$
exists globally in time. Moreover for any $t>0$, $\phi(t,\cdot)$ is monotonically increasing on $[0,\infty)$.
\end{thm}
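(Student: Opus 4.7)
The plan has three ingredients: propagation of the sign of $\phi'$ via a Lagrangian identity, derived sign and monotonicity information on the auxiliary field $v := (1-\Delta)^{-1}\phi$, and an a priori bound on $|\phi(0,t)|$ from which global existence follows via Lemma \ref{lem2new}.

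First I would set up the radial Lagrangian flow $R(t, r_0)$ determined by $\partial_t R = v'(R, t)$ with $R(0, r_0) = r_0$, noting that $u = \nabla v$ in our gradient setting. The Euler-Poincar\'e transport law $J\, m_i\, \partial_k \eta^i = m_k^0$ for the flow $\eta$ with Jacobian $J$ specializes, for radial $m = \nabla\phi$, to the identity
\begin{equation*}
\phi'(R(t, r_0), t) \left(\frac{R(t, r_0)}{r_0}\right)^{d-1} (\partial_{r_0} R(t, r_0))^2 = \phi_0'(r_0).
\end{equation*}
Since the Jacobian factor is strictly positive on the maximal existence interval $[0, T^*)$, the sign $\phi'(\cdot, t) \ge 0$ is preserved. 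Combined with $\phi(r, t) \to 0$ as $r \to \infty$ this gives $\phi \le 0$, and then $v = G \ast \phi \le 0$ since the Bessel kernel $G$ is positive. Differentiating $(1-\Delta)v = \phi$ in $r$ yields, on radial profiles, the elliptic relation
\begin{equation*}
\left(-\Delta + 1 + \frac{d-1}{r^2}\right) v' = \phi' \ge 0,
\end{equation*}
with $v'(0) = 0$ and $v' \to 0$ at infinity; the maximum principle for this positive-potential Schr\"odinger operator then forces $v' \ge 0$. In particular $\|\phi(t)\|_{L^\infty} = -\phi(0, t)$ and $\|v(t)\|_{L^\infty} = -v(0, t) \le -\phi(0, t)$.

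Next I would evaluate \eqref{102a} at $r = 0$, using $\phi'(0) = v'(0) = 0$, to obtain the scalar ODE
\begin{equation*}
\partial_t \phi(0, t) = \tfrac{1}{2}\phi(0, t)^2 + \int_0^\infty \phi'(s, t)\, v(s, t)\, ds,
\end{equation*}
whose integral is nonpositive by the sign information above. Rewriting the integral via integration by parts as $-\phi(0)v(0) - \int_0^\infty \phi\, v'\, ds$ and exploiting the monotone profiles of $\phi$ and $v$, the pointwise identity $\phi(0, t) = v(0, t) - d\, v''(0, t)$ at the origin, and the conserved $H^1$-energy \eqref{conserv}, one aims to isolate a sufficient cancellation between $\tfrac{1}{2}\phi(0, t)^2$ and the integral to close a differential inequality that rules out finite time blowup of $\phi(0, t)$. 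Once $\|\phi(t)\|_{L^\infty}$ is locally bounded in $t$, Lemma \ref{lem2new} upgrades local to global well-posedness in $H^k$, while Step~1's Lagrangian identity keeps $\phi(\cdot, t)$ monotonically increasing for all time.

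The technical core, and main obstacle, is this final quantitative step. The naive pointwise bound $|v(r)| \le |\phi(0)|$ (from $\|G\|_{L^1} = 1$) only yields $\bigl|\int_0^\infty \phi'\, v\, ds\bigr| \le |v(0)|\,|\phi(0)| \le \phi(0, t)^2$, hence $|\partial_t \phi(0, t)| \lesssim \phi(0, t)^2$, which is consistent with finite time blowup of the ODE. Extracting the genuine \emph{nonlinear depletion} advertised in the introduction must rely on a finer use of the coupled monotone structure of $\phi$ and $v$ together with the energy conservation; pinpointing this cancellation is the delicate point of the argument.
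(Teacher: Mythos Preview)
Your sign propagation and ODE setup are fine, and indeed the paper's proof uses exactly the same scalar equation for $\phi(0,t)$ with the same sign information ($\phi'\ge 0$, $\phi\le 0$, $v\le 0$). But the final paragraph is where the real content lies, and you have not supplied it; you explicitly flag it as ``the main obstacle'' and leave it open. The paper closes this step with a concrete, elementary estimate that you are missing.

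The key observation is that $(1-\Delta)^{-1}$ gains integrability: for any small $\epsilon>0$ one has
\[
\|(1-\Delta)^{-1}\phi(t)\|_{L^\infty}\;\le\; C_\epsilon\,\|\phi(t)\|_{L^2}\;+\;\epsilon\,\|\phi(t)\|_{L^\infty},
\]
which follows from a low/high frequency split (Bernstein on $P_{<N}$ and the trivial bound on $P_{\ge N}$). Combine this with the a priori growth $\|\phi(t)\|_{L^2}\lesssim 1+t$ from Proposition~\ref{prop1} (this is where the conserved $H^1$ energy actually enters, via the stress--tensor decomposition \eqref{1d}) and with $\int_0^\infty \phi'(s,t)\,ds=-\phi(0,t)$. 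Writing $y(t):=-\phi(0,t)=\|\phi(t)\|_{L^\infty}\ge 0$, your ODE becomes
\[
\frac{d}{dt}\,y\;\le\;-\tfrac12\,y^2\;+\;\bigl(C_\epsilon(1+t)+\epsilon\,y\bigr)\,y,
\]
and taking $\epsilon=\tfrac14$ gives $\dot y\le -\tfrac14 y^2+C(1+t)\,y$, which forbids finite-time blowup of $y$. Lemma~\ref{lem2new} then closes the argument. There is no subtle ``nonlinear depletion'' beyond this: the naive $\|v\|_\infty\le\|\phi\|_\infty$ that worried you is simply replaced by the sharper $L^2$--$L^\infty$ splitting of the Bessel potential, and the $L^2$ piece is harmless because it grows only polynomially in time.
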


We have the following corollary which computes the asymptotics of $\phi(0,t)$ as $t\to \infty$. To allow some
generality we state it as
a conditional result in that we assume the corresponding solution exists globally in time.

\begin{cor}[Asymptotics of $\phi(0,t)$] \label{cor_1}
Let the dimension $d\ge 2$. Let $\phi_0$ be a radial real-valued function on $\mathbb R^d$ such that
$\phi_0 \in H^k(\mathbb R^d)$ for some $k>\frac d2+4$. For $d=2$ we also assume $\phi_0 \in
\dot B^0_{1,\infty}(\mathbb R^2)$. Assume $\phi_0(0)<0$, $u_0=(1-\Delta)^{-1} \nabla \phi_0$ and the corresponding solution $u(t) =(1-\Delta)^{-1} \nabla \phi(t)$ exists globally on $[0,\infty)$. Then
$\phi(0,t)$ is strictly monotonically increasing in $t$ and $\frac d {dt} \phi(0,t)>0$ for any $t\ge 0$.
  There are some constants $C_1>0$, $C_2>0$ such that for $d\ge 3$
\begin{align} \label{est_500}
0< - \phi(0,t) < \frac {C_1} {1+t}, \qquad \forall\, t>0;
\end{align}
and for $d=2$
\begin{align} \label{est_500a}
0<-\phi(0,t)<\frac {C_2}{\log(10+t)}, \qquad\forall\, t>0.
\end{align}
In particular $\lim_{t\to\infty} \phi(0,t)=0$.
\end{cor}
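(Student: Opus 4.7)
The plan is to work at the origin of \eqref{102a}, derive a monotonicity identity for $\phi(0,t)$ as a sum of manifestly nonnegative terms, and then convert that identity into differential inequalities whose integration yields the stated decay rates.

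I begin with $\phi(0,t)<0$ for every $t\ge 0$, which is forced by the global-existence hypothesis together with Theorem \ref{thm_blowup_main}: if $\phi(0,t_0)\ge 0$ at some $t_0\ge 0$, then restarting the flow from $t_0$ (the data remaining non-trivial by uniqueness, since $u_0\not\equiv 0$) would produce a finite-time blowup, contradicting global existence. Next I would set $r=0$ in \eqref{102a}; since $\phi$ and $\psi:=(1-\Delta)^{-1}\phi$ are smooth radial functions, $\phi'(0,t)=\psi'(0,t)=0$, killing the last term of \eqref{102a}. Substituting $\phi=\psi-\Delta\psi$ with radial Laplacian $\Delta\psi=\psi''+\tfrac{d-1}{s}\psi'$ in the remaining cross-integral and integrating by parts (using $\psi'(0)=0$ and decay at infinity) reorganizes the right-hand side into a perfect square plus a dissipative integral, yielding the monotonicity formula
\begin{equation*}
  \partial_t\phi(0,t)=\tfrac12\bigl(\psi(0,t)-\phi(0,t)\bigr)^2+(d-1)\int_0^\infty\frac{(\psi'(s,t))^2}{s}\,ds.
\end{equation*}
Both summands are nonnegative, and the right-hand side vanishes only if $\phi\equiv 0$; hence $\partial_t\phi(0,t)>0$ for every $t\ge 0$, giving the strict monotonicity.

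For the decay I would convert this identity into a differential inequality for $y(t):=-\phi(0,t)>0$. Using $\psi(0)-\phi(0)=d\psi''(0)=(\operatorname{div}u)(0,t)$, split into two regimes. In the \emph{coercive} regime $|\psi(0,t)|\le \tfrac12|\phi(0,t)|$ the first term of the monotonicity formula alone furnishes $\partial_t\phi(0,t)\ge \tfrac18\phi(0,t)^2$. In the \emph{degenerate} regime $|\psi(0,t)|>\tfrac12|\phi(0,t)|$ one has the pointwise bound $|\phi(0,t)|\le 2|\psi(0,t)|\lesssim \|\phi(t)\|_{L^2}$ (Cauchy--Schwarz against the Bessel kernel, which lies in $L^2(\mathbb R^d)$ for $d\in\{2,3\}$; in higher dimensions one uses a radial Sobolev embedding together with the conserved $H^1$ energy \eqref{conserv}). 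Combining with the dissipative integral and the $L^2$-growth bounds \eqref{hk_1}--\eqref{hk_2} from Proposition \ref{prop1} produces $\partial_t\phi(0,t)\ge c\phi(0,t)^2$ for $d\ge 3$ and $\partial_t\phi(0,t)\ge c\phi(0,t)^2/(1+t)$ for $d=2$. Integrating the corresponding ODEs $\dot y\le -cy^2$ and $\dot y\le -cy^2/(1+t)$ yields respectively $y(t)\lesssim 1/(1+t)$ and $y(t)\lesssim 1/\log(10+t)$; in particular $\phi(0,t)\to 0$.

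The main obstacle is the degenerate regime, where the coercive first term of the monotonicity formula is too small to dominate on its own; one must then quantitatively exploit the dissipative integral $\int_0^\infty(\psi')^2/s\,ds$ in tandem with the time-dependent $L^2$-growth bounds of Proposition \ref{prop1}. The dichotomy between the $d=2$ and $d\ge 3$ decay rates ultimately traces back to the different polynomial growth rates of $\|\phi(t)\|_{L^2}$ in those dimensions.
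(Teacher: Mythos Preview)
Your overall plan matches the paper's: derive the monotonicity identity
\[
\partial_t\phi(0,t)=(d-1)\int_0^\infty\frac{(g')^2}{r}\,dr+\tfrac12\bigl(\phi(0,t)-g(0,t)\bigr)^2,\qquad g=(1-\Delta)^{-1}\phi,
\]
observe it forces $\phi(0,t)<0$ under the global-existence hypothesis, upgrade it to a differential inequality for $\phi(0,t)$, and integrate. The derivation of the identity and the conclusion $\partial_t\phi(0,t)>0$ are correct and identical to the paper's.

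The gap is in your ``degenerate regime''. The bound you write there, $|\phi(0,t)|\le 2|\psi(0,t)|\lesssim\|\phi(t)\|_{L^2}$, is an \emph{upper} bound on $|\phi(0,t)|$; it does not by itself produce a \emph{lower} bound on $\partial_t\phi(0,t)$ of the form $c\,\phi(0,t)^2$. You say one should ``combine with the dissipative integral'', but you do not indicate how, and a naive combination does not close. The mechanism the paper uses (borrowed verbatim from the proof of Theorem~\ref{thm_blowup_main}) is different: rather than splitting into two regimes, it fuses the two terms of the identity through the Cauchy--Schwarz estimate
\[
|g(0,t)-g(R,t)|\le R\Bigl(\int_0^R\frac{(g')^2}{r}\,dr\Bigr)^{1/2},
\]
which yields for every $R\ge 1$ the single inequality
\[
\partial_t\phi(0,t)\ge \frac{c}{R^2}\bigl(\phi(0,t)-g(R,t)\bigr)^2.
\]
The point is that this replaces the uncontrolled $g(0,t)$ by $g(R,t)$, which one can estimate via radial Sobolev. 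For $d\ge 3$ one has $|g(R,t)|\le C_d\|u_0\|_{H^1}\,R^{-(d-2)/2}$ \emph{uniformly in $t$} by the conserved energy \eqref{conserv}; for $d=2$ one takes $R=R_0(1+t)^{1/2}$ and uses \eqref{hk_1}. This is the step your sketch is missing, and your final sentence slightly misdiagnoses the dimensional dichotomy: for $d\ge 3$ the relevant input is the conserved $H^1$ norm of $u$, not the growth of $\|\phi(t)\|_{L^2}$.
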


\begin{rem}
The decay rates in \eqref{est_500}--\eqref{est_500a} is probably not optimal. It is an interesting
question to study the long time behavior of global solutions to such systems with no damping or dissipation.
\end{rem}

It is very tempting to conjecture that the single condition $\phi_0(0)<0$ may yield global wellposedness.
Our last result rules out this possibility. We exhibit a family of
 smooth \emph{negative} initial data for which
the corresponding solution blows up in finite time. In particular the initial data $\phi_0$ will
satisfy $\phi_0(0)<0$.
\begin{thm} \label{thm4}
There exists a family $\mathcal A$ of smooth initial data such that the following holds:
\begin{itemize}
\item For each $\phi_0 \in \mathcal A$, we have $\phi_0(x)<0$ for any $x\in \mathbb R^d$.
\item The corresponding solution $u(t)=(1-\Delta)^{-1} \nabla \phi(t)$
blows up in finite time. Moreover $\phi(0,t)$ is a monotonically
increasing function of $\,t$ for each $t$ within the lifespan of the solution.
\end{itemize}
\end{thm}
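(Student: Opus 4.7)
The strategy I would pursue is to construct $\mathcal A$ explicitly and reduce the blow-up question to Theorem \ref{thm_blowup_main} via a Riccati-type argument on $\phi(0,t)$. A natural prototype for an element of $\mathcal A$ is a ``well''-shaped radial profile of the form
\begin{align*}
\phi_0(r) = -\epsilon\,\eta_0(r) - \lambda\,\psi_0(r),
\end{align*}
where $\eta_0 \ge 0$ is a smooth Schwartz bump with $\eta_0(0)>0$, $\psi_0 \ge 0$ is a smooth bump concentrated on an annulus $\{r\sim R\}$ for some moderate $R>0$, and $0<\epsilon\ll \lambda$ are chosen so that $\phi_0(x)<0$ for every $x\in\mathbb{R}^d$, while $|\phi_0(0)|$ is small compared to the depth of the well at $r=R$. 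In particular $\phi_0'$ changes sign, so $\phi_0$ falls outside the hypothesis of Theorem \ref{thm2}.

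Evaluating \eqref{102a} at $r=0$ yields the evolution identity
\begin{align*}
\frac{d}{dt}\phi(0,t) = \frac{1}{2}\phi(0,t)^2 + \int_0^\infty \phi'(s,t)\bigl((1-\Delta)^{-1}\phi\bigr)(s,t)\,ds - \bigl((1-\Delta)^{-1}|\nabla\phi|^2\bigr)(0,t),
\end{align*}
whose first term is nonnegative and whose last term is nonpositive. The heart of the construction is to show that, thanks to the shape of $\phi_0$, the middle integral is strictly positive at $t=0$ and dominates the defect term. Indeed on $(0,R)$ both $\phi_0'$ and $(1-\Delta)^{-1}\phi_0$ are negative (the latter because the Bessel kernel is positive), so the integrand is positive there; on $(R,\infty)$ the integrand flips sign, but by tuning $R$ and $\lambda$ against the exponential decay of the Bessel kernel the inner contribution dominates. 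This yields $\frac{d}{dt}\phi(0,t)\big|_{t=0}\ge c_0>0$ with an explicit $c_0 = c_0(\epsilon,\lambda,R)$.

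The propagation of this lower bound I would obtain by a short-time bootstrap. Using the $H^k$ control from Proposition \ref{prop1} together with the embedding $H^k\hookrightarrow C^2$, one shows that $\phi(\cdot,t)$ stays uniformly $C^2$-close to $\phi_0$ on a small interval $[0,T_1]$ with $T_1=T_1(\phi_0)>0$, so the sign pattern used above is preserved and $\frac{d}{dt}\phi(0,t)\ge c_0/2$ holds throughout $[0,T_1]$. Choosing the parameters so that $|\phi_0(0)|<c_0 T_1/4$ (for fixed $\lambda$, $R$, both $c_0$ and $T_1$ are essentially independent of $\epsilon$ while $|\phi_0(0)|=O(\epsilon)$), one forces $\phi(0,t)$ to reach $0$ at some time $T_0\le 2|\phi_0(0)|/c_0 < T_1$, unless the solution has already blown up. Since $\phi(\cdot,T_0)$ is still a nontrivial radial profile, Theorem \ref{thm_blowup_main} applied with initial time $T_0$ produces finite-time blow-up after $T_0$, yielding a finite total lifespan. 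The asserted monotonicity of $\phi(0,t)$ throughout the lifespan follows from the same pointwise lower bound, extended past $T_0$ by continuation of the shape argument on the (necessarily short) residual lifespan provided by Lemma \ref{lem2new}.

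The principal obstacle is precisely this propagation of the sign structure: the operator $(1-\Delta)^{-1}$ couples distant radii, so a mild deformation of $\phi(\cdot,t)$ could in principle flip the delicate cancellations that produce positivity of the middle integral. Overcoming this rests on two ingredients: the short-time $C^2$-closeness furnished by Proposition \ref{prop1}, which guarantees shape persistence on $[0,T_1]$; and a careful parameter tuning so that the smallness requirement $|\phi_0(0)|<c_0 T_1/4$ is compatible with the global negativity of $\phi_0$. The subtlety is that $T_1$ and $c_0$ both depend implicitly on the construction parameters, but a consistent choice should exist by fixing $R$ moderately large first, then $\lambda$ to set the scale of $c_0$, and finally $\epsilon$ small enough to make $|\phi_0(0)|$ negligible.
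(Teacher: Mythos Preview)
Your high-level strategy---build $\phi_0<0$ with $|\phi_0(0)|$ small, drive $\phi(0,t)$ up to zero in finite time, then invoke Theorem~\ref{thm_blowup_main}---is exactly the skeleton of the paper's argument. The execution, however, diverges from the paper in two places, and one of them is a genuine gap.

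First, you have misread the last term in \eqref{102a}. The expression $(1-\Delta)^{-1}\nabla\phi\cdot\nabla\phi$ denotes $\bigl((1-\Delta)^{-1}\nabla\phi\bigr)\cdot\nabla\phi = u\cdot m = g'(r)\,\phi'(r)$ (with $g=(1-\Delta)^{-1}\phi$), not the Bessel potential applied to $|\nabla\phi|^2$. Since $\phi$ is smooth and radial, $\phi'(0)=g'(0)=0$, so this term vanishes at $r=0$. There is no ``defect term'' to dominate; the whole middle paragraph of your proposal is fighting a non-existent enemy, and the Bessel-kernel tuning you describe is unnecessary.

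Second, and more seriously, you do not use the identity \eqref{510}, which is the real engine here. Rewriting with $g=(1-\Delta)^{-1}\phi$ gives
\[
\frac{d}{dt}\phi(0,t)=(d-1)\int_0^\infty\frac{(g')^2}{r}\,dr+\tfrac12\bigl(\phi(0,t)-g(0,t)\bigr)^2,
\]
a sum of two nonnegative terms, strictly positive whenever $\phi$ is nontrivial. This delivers the monotonicity of $\phi(0,t)$ on the \emph{entire} lifespan with no further work. Your proposed ``continuation of the shape argument on the residual lifespan'' is not a proof: once $\phi(0,t)$ crosses zero the profile deforms, and nothing in your sign analysis survives that. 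Identity \eqref{510} handles this for free.

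Where the paper's construction genuinely differs from yours is that it avoids forward parameter-tuning altogether. It fixes a radial $\psi_0$ with $\psi_0(0)=0$ and the sign pattern \eqref{cond_800}, runs the equation \emph{backward} a short time $t_0$, and verifies via characteristics for the transport equation satisfied by $\psi'$ that $\psi(x,-t_0)<0$ for all $x$. Setting $\phi_0:=\psi(\cdot,-t_0)$, forward evolution returns to $\psi_0$ at time $t_0$, where $\phi(0,t_0)=0$ and Theorem~\ref{thm_blowup_main} applies. No estimate of $c_0$ or $T_1$ is needed, because reaching the threshold is built into the construction. Your forward approach can be repaired once \eqref{510} is in hand (the lower bound it provides is continuous in the $H^k$ data and hence uniform as $\epsilon\to 0$), but the backward-time trick is considerably cleaner.
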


We conclude the introduction by setting up some

\subsubsection*{Notations}
For any two quantities $X$ and $Y$, we shall write $X\lesssim Y$ if
$X\le CY$ for some harmless constant $C>0$. Similarly we define $X\gtrsim Y$.
We write $X\sim Y$ if both $X\lesssim Y$ and $X \gtrsim Y$ hold.

We will  need to use the
Littlewood-Paley frequency projection operators. Let $\varphi(\xi)$ be a smooth bump
function supported in the ball $|\xi| \leq 2$ and equal to one on
the ball $|\xi| \leq 1$.  For each dyadic number $N \in 2^{\mathbb Z}$ we
define the Littlewood-Paley operators
\begin{align}
\widehat{P_{\leq N}f}(\xi) &:=  \varphi(\xi/N)\hat f (\xi), \notag\\
\widehat{P_{> N}f}(\xi) &:=  [1-\varphi(\xi/N)]\hat f (\xi), \notag\\
\widehat{P_N f}(\xi) &:=  [\varphi(\xi/N) - \varphi (2 \xi /N)] \hat
f (\xi). \label{lp_def}
\end{align}
Similarly we can define $P_{<N}$, $P_{\geq N}$, and $P_{M < \cdot
\leq N} := P_{\leq N} - P_{\leq M}$, whenever $M$ and $N$ are dyadic
numbers.

We recall the following standard Bernstein inequality: for any $1\le p<q\le \infty$,
\begin{align*}
\| P_{N} f \|_{L^q(\mathbb R^d)} \lesssim N^{d(\frac 1p -\frac 1q)} \| P_N f \|_{L^p(\mathbb R^d)}.
\end{align*}
Here $P_{N}$ can be replaced by $P_{<N}$ or $P_{\le N}$.

 For any
$1\le p \le \infty$, the homogeneous Besov norm $\dot
B^0_{p,\infty}$ is defined as
\begin{align} \label{besov}
\|f\|_{\dot B^0_{p,\infty}} = \sup_{{ M\in 2^{\mathbb Z}} } \| P_M
f\|_{L^{p}(\mathbb R^d)}
\end{align}

We need the following interpolation inequality on $\mathbb R^2$:
\begin{align}
\| f\|_{L^2(\mathbb R^2)} \lesssim \| f \|^{\frac 12}_{ \dot B^0_{1,\infty}(\mathbb R^2)}
\cdot \| \nabla f \|_{L^2(\mathbb R^2)}^{\frac 12}. \label{hk_int_1}
\end{align}

The proof of \eqref{hk_int_1} is a standard exercise in Littlewood-Paley calculus. We sketch
it here for the sake of completeness.
\begin{proof}[Proof of \eqref{hk_int_1}]
Let $N_0>0$ be a dyadic number whose value will be chosen later. Then by Bernstein, we have
\begin{align*}
\| f \|_{L^2(\mathbb R^2)}^2 & \lesssim \sum_{N<N_0} N^2 \|P_N f \|_{L^1(\mathbb R^2)}^2
+ \sum_{N\ge N_0} N^{-2} \cdot \| \nabla P_N f \|^2_{L^2(\mathbb R^2)} \notag \\
& \lesssim N_0^2 \| f \|^2_{\dot B^0_{1,\infty}(\mathbb R^2)} + N_0^{-2}
\| \nabla f\|_{L^2(\mathbb R^2)}^2.
\end{align*}
Choosing $N_0 \sim \frac {\| \nabla f \|_{2}}{\| f \|_{\dot B^0_{1,\infty}}}$ then yields the result.
\end{proof}

\subsection*{Acknowledgements}
The research of D. Li is partly supported by NSERC Discovery grant and a
start-up grant from University of British Columbia. The research of X. Yu is partly supported by
NSERC Discovery grant and a start-up grant from University of Alberta.
D. Li was also supported in part by NSF under agreement
No. DMS-1128155. Any opinions, findings and conclusions or recommendations
expressed in this material are those of the authors and do not necessarily reflect
the views of the National Science Foundation.

\section{Proof of Proposition \ref{prop1} and some intermediate
results}

In this section we first give the proof of Proposition \ref{prop1}.
After that we shall deduce several weak blowup results some of which
has certain concentration and/or size restrictions on the initial
data. However the proofs of these results are somewhat simpler and
they serve to illustrate main difficulties in proving the sharp
result Theorem \ref{thm_blowup_main}.

\begin{proof}[Proof of Proposition \ref{prop1}]
Since $m=(1-\Delta)u = \nabla \phi$, we have $u= (1-\Delta)^{-1} \nabla \phi$. By using \eqref{conserv}, we obtain
\begin{align}
\| (1-\Delta)^{-1} \nabla \phi(t)\|_2+ \sum_{i,j=1}^d \| (1-\Delta)^{-1} \partial_i \partial_j \phi(t)\|_2 \lesssim 1, \qquad \forall\, t\ge 0.
\label{phi_conserv}
\end{align}
From \eqref{phi_conserv}, we have
\begin{align*}
\| P_{\ge 1} \phi(t) \|_2 \lesssim 1, \quad\forall\, t\ge 0.
\end{align*}

 By using the local
theory worked out in \cite{Chae Liu}, we then only need to show the
persistence of negative regularity and estimate $\| P_{<1} \phi(t)\|_2$. By \eqref{1c}, we have
\begin{align*}
m_i(t) =m_i(0) - \sum_{j=1}^d \int_0^t (\partial_j T_{ij})(\tau)
d\tau.
\end{align*}
Therefore by \eqref{1d} and Bernstein,
\begin{align*}
\| P_{<1} \phi(t) \|_2 & \lesssim \| P_{<1} \Delta^{-1} \nabla \cdot m(t) \|_2 \notag \\
& \lesssim \|\phi(0)\|_2+ \sum_{j=1}^d \int_0^t \| P_{<1} \Delta^{-1} \partial_i \partial_j T_{ij}(\tau) \|_2 d\tau \notag \\
& \lesssim \|\phi(0)\|_2+ \int_0^t \Bigl( \| u(\tau)\|_2^2 + \| \nabla u(\tau) \|_2^2 \Bigr) d\tau \notag \\
& \lesssim \|\phi(0)\|_2+ C_1 t,\quad \forall\, t\ge 0,
\end{align*}
where $C_1>0$ depends on $\|u_0\|_{H^1}$, and we have used the conservation law \eqref{conserv}. Hence the estimate
\eqref{hk_2} follows.

Similarly by using the fact that
\begin{align*}
\sup_{N \in 2^{\mathbb Z} } \| P_N \Delta^{-1} \partial_i \partial_j
\|_{L^1\to L^1} <\infty,
\end{align*}
we obtain in the case $d=2$,
\begin{align*}
\| P_{<1} \phi(t) \|_{\dot B^0_{1,\infty}(\mathbb R^2)} \le C_2
(1+t), \quad\forall\, t\ge 0,
\end{align*}
where $C_2>0$ depends only on $\phi_0$. The growth estimate
\eqref{hk_1} then follows from the above estimate, the conservation
law $\| \nabla P_{<1} \phi\|_2 \lesssim \| P_{<1} (1-\Delta)u \|_2
\lesssim \|u\|_2 \lesssim 1$, and the interpolation inequality
\eqref{hk_int_1} (applied to $f=P_{<1} \phi$).

Finally we need to justify \eqref{102a}. In particular we need to show that the integral
$\int_r^{\infty} (\phi^{\prime})(s,t) ((1-\Delta)^{-1} \phi)(s,t) ds$ converges. Indeed this follows
from the estimate
\begin{align*}
\int_r^{\infty} |\phi^{\prime}| \cdot |(1-\Delta)^{-1} \phi| ds & \lesssim
\left\| \frac{ |\nabla \phi| \cdot (1-\Delta)^{-1} \phi} {|x|^{d-1}} \right\|_{L^1(\mathbb R^d)} \notag \\
& \lesssim \| \nabla \phi\|_{\infty} \cdot \| (1-\Delta)^{-1} \phi \|_{\infty}
+ \| \nabla \phi\|_2 \cdot \| (1-\Delta)^{-1} \phi\|_2 \notag \\
& <\infty.
\end{align*}
Since $\phi \in H^k$, $\phi$ is a smooth function. Since the above integral converges, it follows
that \eqref{102a} holds in the classical sense.
\end{proof}

We now formulate a simple blowup result which requires three rather restrictive conditions on the initial
$\phi_0$: positivity, monotonicity and sufficient concentration at the spatial origin. Due to these
simplifying assumptions, the proof is much simpler
compared to that of our main theorem \ref{thm_blowup_main} in later sections. Note that the case dimension
$d=1$ is covered here which cannot be handled by Theorem \ref{thm_blowup_main}.

\begin{thm} \label{thm1a}
Let the dimension $d\ge 1$. Assume $\phi_0$ is a radial\footnote{In 1D, we simply
require $\phi_0$ is an even function.} real-valued function  on ${\mathbb R}^d$ and $\phi_0 \in H^k(\mathbb R^d)$ for some $k>\frac d2+4$. Assume
$\phi_0^{\prime}(r)\le 0$ for any $r>0$ and $\phi_0$ is not identically zero. There exists a constant $C>0$ such that
if
\begin{align*}
\phi_0(0) \ge C \| \phi_0 \|_{L_x^2(\mathbb R^d)}
\end{align*}
and  the initial velocity $u_0 = (1-\Delta)^{-1} m_0 = (1-\Delta)^{-1} \nabla \phi_0$, then the corresponding solution
 blows up in finite time.
 \end{thm}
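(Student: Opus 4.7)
The plan is to derive a Riccati differential inequality for $M(t):=\phi(0,t)$ and exploit the concentration hypothesis to make the quadratic self-interaction dominate every error term long enough to force blowup. The argument has three main stages.

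First, I would propagate sign and monotonicity. The hypotheses $\phi_0\ge 0$ (forced by $\phi_0(0)\ge C\|\phi_0\|_{L^2}$ combined with $\phi_0'\le 0$) and $\phi_0'\le 0$ should persist throughout the lifespan: both follow from a maximum-principle argument applied to the pointwise equation \eqref{102a} and its radial derivative. A direct consequence is that $\psi(r,t):=((1-\Delta)^{-1}\phi)(r,t)$ is radial, nonnegative, and monotonically decreasing in $r$, since the Bessel kernel of $(1-\Delta)^{-1}$ is positive and radially decreasing; in particular $\psi(r,t)\le\psi(0,t)$ pointwise. Next I would evaluate \eqref{102a} at $r=0$: with $F(r,t):=((1-\Delta)^{-1}|\nabla\phi|^2)(r,t)\ge 0$ this yields
\begin{equation*}
\dot M(t)\;=\;\tfrac12 M(t)^2\;+\;\int_0^\infty \phi'(s,t)\,\psi(s,t)\,ds\;-\;F(0,t).
\end{equation*}
The middle integral is nonpositive by the sign step, and its modulus is at most $\psi(0,t)\,M(t)$ because $\int_0^\infty |\phi'(s,t)|\,ds=M(t)$ (monotone drop from $M$ to $0$).

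Second, I would bound the two error contributions. Young's inequality against the Bessel kernel gives $\psi(0,t)\lesssim\|\phi(t)\|_{L^2}$ whenever $G\in L^2$ (i.e.\ in low dimensions); in higher dimensions I would instead split $\phi=P_{<1}\phi+P_{\ge 1}\phi$ and combine the conserved high-frequency bound with the slow growth of $\|P_{<1}\phi\|_{L^2}$ from Proposition~\ref{prop1}. For $F(0,t)$ I would use $(1-\Delta)^{-1}:L^p\to L^\infty$ for $p>d/2$ together with Sobolev embedding to get $F(0,t)\lesssim \|\phi(t)\|_{H^k}^{\,2}$, and then appeal to local $H^k$ well-posedness and continuity to fix a short interval $[0,\tau_0]$ on which $\|\phi(t)\|_{H^k}\le 2\|\phi_0\|_{H^k}$, hence $F(0,t)\le K_\ast$ for some $K_\ast$ depending on $\|\phi_0\|_{H^k}$. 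Combined with $\|\phi(t)\|_{L^2}\le B(1+t)$ from Proposition~\ref{prop1}, this produces the Riccati-type differential inequality
\begin{equation*}
\dot M(t)\;\ge\;\tfrac12 M(t)^2\;-\;C_1\,M(t)\,\|\phi(t)\|_{L^2}\;-\;K_\ast \qquad \text{on }[0,\tau_0].
\end{equation*}

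Finally, the concentration hypothesis closes the argument: if $C$ is chosen sufficiently large (in terms of $B$, $C_1$, $K_\ast$), then $M(0)\ge C\|\phi_0\|_{L^2}$ makes the quadratic term dominate both error terms throughout $[0,\tau_0]$, giving $\dot M\ge \tfrac14 M^2$, and Riccati comparison yields blowup at some time $t_\ast\lesssim 1/M(0)<\tau_0$. The principal obstacle I expect is the bound on $F(0,t)$: because $(1-\Delta)^{-1}|\nabla\phi|^2$ is not directly tied to any conservation law of the Euler--Poincar\'e flow, one is forced to rely on short-time persistence of high Sobolev regularity, and the concentration hypothesis must be quantitatively strong enough that the Riccati blowup time $1/M(0)$ lies comfortably inside the window $\tau_0$ on which the $H^k$ norm, and hence $K_\ast$, stays under control.
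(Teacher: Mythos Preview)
Your overall strategy---propagate $\phi'\le 0$, evaluate \eqref{102a} at the origin, bound the nonlocal error via $\|\phi(t)\|_{L^2}$, and close with a Riccati inequality---matches the paper's. But there is one genuine gap: you have misread the last term of \eqref{102a}. It is
\[
\bigl((1-\Delta)^{-1}\nabla\phi\bigr)\cdot\nabla\phi \;=\; u\cdot m,
\]
the pointwise inner product of the two vector fields (cf.\ \eqref{100}), \emph{not} $(1-\Delta)^{-1}\bigl(|\nabla\phi|^2\bigr)$ as your definition of $F$ presumes. Since $\phi$ is smooth and radial, $\nabla\phi(0,t)=0$, so this term vanishes identically at $r=0$ and the ODE for $M(t)=\phi(0,t)$ is simply
\[
\dot M \;=\; \tfrac12 M^2 \;+\; \int_0^\infty \phi'(s,t)\,\psi(s,t)\,ds,
\]
with no $F(0,t)$ contribution whatsoever.

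This misreading is what drives your Stage~2 detour through short-time $H^k$ persistence and the constants $K_\ast$, $\tau_0$. That detour is not merely inelegant; it breaks the theorem as stated. Your Riccati blowup time is $\sim 1/M(0)$, while your window $\tau_0$ scales like $1/\|\phi_0\|_{H^k}$; forcing the former inside the latter would require $\phi_0(0)\gtrsim\|\phi_0\|_{H^k}$, strictly stronger than the stated hypothesis $\phi_0(0)\ge C\|\phi_0\|_{L^2}$ with a universal $C$. Once the spurious $F$ term is dropped, the only error is the integral, which---exactly as you observe---has modulus at most $\|(1-\Delta)^{-1}\phi\|_\infty\,M(t)$. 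The paper handles this uniformly in all dimensions by the elementary splitting $\|(1-\Delta)^{-1}\phi\|_\infty\le C\|\phi\|_2+\tfrac{1}{100}\|\phi\|_\infty = C\|\phi\|_2+\tfrac{1}{100}M$, absorbs the second piece into $\tfrac12 M^2$, and arrives at $\dot M\ge\tfrac14 M^2 - C\|\phi_0\|_2(1+t)\,M$, from which blowup follows for $M(0)$ large relative to $\|\phi_0\|_{L^2}$ alone.
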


\begin{proof}[Proof of Theorem \ref{thm1a}]
Note that by assumption we have $\phi_0$ attains its global maximum at $r=0$ and $\phi_0(0)>0$. We first show that
for any $t>0$ within the lifespan of the solution, we have $\phi^{\prime}(r,t) \le 0$ for any $r>0$. Indeed by
\eqref{102}, we have
\begin{align*}
-\partial_t \phi^{\prime}(r,t) & = - \phi(r,t) \phi^{\prime}(r,t) +\phi^{\prime}(r,t)
( (1-\Delta)^{-1} \phi)(r,t) \notag \\
& \qquad + \Bigl(  (( 1-\Delta)^{-1} \phi)^{\prime} \cdot \phi^{\prime} \Bigr)^{\prime}.
\end{align*}

Set $g(r,t)=\phi^{\prime}(r,t)$, then by using the above equation and grouping the coefficients, we see that
\begin{align*}
\partial_t g(r,t) + a_1(r,t) g(r,t) + a_2(r,t) \partial_r g(r,t) =0, \quad \forall r\ge 0,
\end{align*}
where $a_1$, $a_2$ are some smooth functions. Since $g(r,0)=\phi_0^{\prime}(r) \le 0$,
a simple method of characteristics argument then yields immediately that $g(r,t)\le 0$, $\forall\, r\ge0$.
Hence $\phi^{\prime}(r,t)\le 0$, for any $r\ge 0$.

Now set $r=0$ in \eqref{102a}, we obtain
\begin{align}
\frac d {dt} \phi(0,t)= \frac 1 2 \phi(0,t)^2+ \int_0^{\infty} \phi^{\prime}(s,t)
((1-\Delta)^{-1} \phi)(s,t)ds. \label{106}
\end{align}

By using an argument similar to the derivation of \eqref{hk_1}--\eqref{hk_2} (here we are treating all dimensions $d\ge 1$), we have for all $t\ge 0$,
\begin{align}
\| \phi(t) \|_{L^2(\mathbb R^d)} & \lesssim \| P_{<1} \phi(t) \|_{L^2(\mathbb R^d)} + \| P_{\ge 1}
 \phi(t) \|_{L^2(\mathbb R^d)} \notag \\
& \lesssim \| \phi_0 \|_{L^2(\mathbb R^d)} + \| u_0 \|_{ H^1(\mathbb R^d)} t \notag \\
& \lesssim \| \phi_0\|_{L^2(\mathbb R^d)} (1+t), \label{hk_thm1a_2}
\end{align}
where we have used the relation $u_0=(1-\Delta)^{-1} \nabla \phi_0$.

Since $\phi^{\prime}(r,t)\le 0$ for any $r\ge 0$, we have $\| \phi(t)\|_{\infty} = \phi(0,t)$. Therefore by
\eqref{hk_thm1a_2}, we have
\begin{align}
\| (1-\Delta)^{-1} \phi\|_{\infty}
& \le C \| \phi(t)\|_2+ \frac 1 {100} \|\phi(t)\|_{\infty} \notag \\
& \le C \|\phi_0\|_2 + \frac 1 {100} \phi(t,0). \label{110}
\end{align}

Plugging \eqref{110} into \eqref{106} and using the fact that $(1-\Delta)^{-1} \phi \ge 0$,
$\phi^{\prime} \le 0$, we have
\begin{align}
\frac d {dt} \phi(0,t) \ge \frac 14 \phi(0,t)^2 -  C\| \phi_0 \|_2 (1+t) \cdot \phi(0,t).
\label{hk_thm1a_3}
\end{align}

Clearly for $\phi_0(0)>0$ sufficiently large (compared to $\| \phi_0\|_2$), $\phi(0,t)$ will blow up in finite time.
\end{proof}

Our next result refines Theorem \ref{thm1a} in that it removes the size assumption on $\phi_0$. For some
technical reasons (see Lemma \ref{lem100a}), it only treats
 dimensions $d\ge 3$.

\begin{thm} \label{thm1b}
Let the dimension $d\ge 3$. Assume $\phi_0$ is a radial real-valued function on $\mathbb R^d$
 and $\phi_0 \in H^k(\mathbb R^d)$ for some $k>\frac d2+4$. Assume
$\phi_0^{\prime}(r)\le 0$ for any $r>0$ and $\phi_0$ is not identically zero.
If  the initial velocity $u_0 = (1-\Delta)^{-1} m_0 = (1-\Delta)^{-1} \nabla \phi_0$,
then the corresponding solution
 blows up in finite time.
 \end{thm}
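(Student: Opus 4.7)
My plan is to reduce to a scalar ODE at the spatial origin and combine the monotonicity of $\phi$ (propagated from the initial data) with a refined Bessel-potential estimate that becomes available only in dimensions $d\ge 3$. First, by the transport argument already used in the proof of Theorem \ref{thm1a}, the sign condition $\phi'(r,t)\le 0$ is preserved along the flow. Hence $\phi(\cdot,t)$ is radially non-increasing and, since it decays at infinity, nonnegative, so $\phi(0,t)=\|\phi(t)\|_{L^\infty}$, and $\psi(\cdot,t):=(1-\Delta)^{-1}\phi(\cdot,t)$ is also nonnegative and radially non-increasing. Because $\phi_0'\le 0$, $\phi_0\to 0$ at infinity, and $\phi_0\not\equiv 0$, we must have $\phi_0(0)>0$, and a continuity argument propagates $\phi(0,t)>0$ throughout the lifespan. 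Evaluating \eqref{102a} at $r=0$, the last term vanishes by smoothness of the radial profile, giving
\begin{align*}
\frac{d}{dt}\phi(0,t) \;=\; \tfrac{1}{2}\phi(0,t)^2 \;-\; I(t),\qquad I(t)\;:=\;-\int_0^\infty \phi'(s,t)\,\psi(s,t)\,ds\;\ge\;0.
\end{align*}

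The crude bound $I(t)\le \psi(0,t)\int_0^\infty |\phi'|\,ds = \psi(0,t)\phi(0,t)\le \phi(0,t)^2$ is exactly what trapped Theorem \ref{thm1a} under a size condition, so the task in dimension $d\ge 3$ is to improve this to a strictly sub-quadratic bound; this is the role of Lemma \ref{lem100a}. The heuristic is that for $d\ge 3$ the Bessel kernel $G_2$ of $(1-\Delta)^{-1}$ satisfies $\int_{|y|<R}G_2(y)\,dy\lesssim R^2$, while the radially-decreasing Chebyshev inequality gives $\phi(R,t)\lesssim \|\phi(t)\|_{L^2}\,R^{-d/2}$. Splitting the convolution defining $\psi(0,t)$ at a radius $R$ and optimizing yields
\begin{align*}
\psi(0,t)\;\lesssim\;\phi(0,t)^{d/(d+4)}\,\|\phi(t)\|_{L^2}^{4/(d+4)}.
\end{align*}
Combined with the linear-in-$t$ estimate on $\|\phi(t)\|_{L^2}$ from Proposition \ref{prop1} and with $\alpha:=(2d+4)/(d+4)\in(1,2)$, this produces the differential inequality
\begin{align*}
\frac{d}{dt}\phi(0,t)\;\ge\;\tfrac{1}{2}\phi(0,t)^2\;-\;C\,\phi(0,t)^{\alpha}\,(1+t)^{2-\alpha}.
\end{align*}

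The right-hand side is positive and comparable to $\phi(0,t)^2$ as soon as $\phi(0,t)\ge K(1+t)$ for a threshold $K$ depending only on $\|\phi_0\|_{L^2}$ and $d$, in which regime a standard Riccati argument forces finite-time blowup. The main obstacle I anticipate is showing that the trajectory $\phi(0,t)$ actually reaches this threshold, since no size information on $\phi_0(0)$ is postulated. My plan to close this gap is a contradiction argument: assuming the solution is global and $\phi(0,t)\le K(1+t)$ for all $t\ge 0$, one exploits the identity (rather than inequality) form of the ODE together with the gradient-flow structure $u=\nabla\psi$ underlying the radial reduction to construct a monotone time-integrated functional of $\phi(0,t)$; using Lemma \ref{lem100a} one shows that in the sublinear regime this functional must grow to a value that eventually contradicts either the conserved $H^1$ energy of $u$ or the linear growth bound on $\|\phi(t)\|_{L^2}$, thereby ruling out global existence.
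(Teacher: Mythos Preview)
Your reduction to the scalar ODE and the propagation of monotonicity are fine, and your interpolation estimate for $\psi(0,t)$ is correct. But the core of your plan has a structural gap that the final ``contradiction argument'' does not close.

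The issue is that bounding $I(t)$ from above by $\psi(0,t)\,\phi(0,t)$ is intrinsically too lossy. Even if you feed in the best possible control on $\psi(0,t)$---namely Lemma~\ref{lem100a} applied with the \emph{conserved} quantity $\|\tfrac{|\nabla|}{1+|\nabla|}\phi\|_{L^2}\sim\|u\|_{H^1}$ rather than the linearly growing $\|\phi\|_{L^2}$---you only obtain $\psi(0,t)\le(1-\epsilon_0)\phi(0,t)$, hence
\[
\frac{d}{dt}\phi(0,t)\ \ge\ \tfrac12\phi(0,t)^2-(1-\epsilon_0)\phi(0,t)^2\ =\ \bigl(\epsilon_0-\tfrac12\bigr)\phi(0,t)^2,
\]
which is useless unless $\epsilon_0>\tfrac12$, and Lemma~\ref{lem100a} gives no such quantitative bound. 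Your actual route is worse: by using $\|\phi(t)\|_{L^2}\lesssim 1+t$ you introduce the moving threshold $\phi(0,t)\gtrsim K(1+t)$, and nothing in your outline forces the trajectory across it. The closing paragraph invokes a ``monotone time-integrated functional'' and Lemma~\ref{lem100a} but never says what the functional is or why it contradicts anything; as written this is a hope, not an argument.

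The paper avoids both problems by an integration-by-parts identity that you are missing. Writing $g=(1-\Delta)^{-1}\phi$ and using $\phi'=g'-(\Delta g)'$, $\tfrac{\Delta}{1-\Delta}\phi=\Delta g$, one rewrites the right-hand side of \eqref{106} \emph{exactly} as
\[
\frac{d}{dt}\phi(0,t)\ =\ (d-1)\int_0^\infty \frac{(g')^2}{r}\,dr\ +\ \tfrac12\bigl((\Delta g)(0,t)\bigr)^2\ =\ (d-1)\int_0^\infty \frac{(g')^2}{r}\,dr\ +\ \tfrac12\bigl(\phi(0,t)-\psi(0,t)\bigr)^2,
\]
a sum of two nonnegative terms. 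Now Lemma~\ref{lem100a}, applied with the \emph{time-independent} bound $\|\tfrac{|\nabla|}{1+|\nabla|}\phi(\cdot,t)\|_{L^2}\le C_3$ coming from energy conservation, yields $\phi(0,t)-\psi(0,t)\ge\epsilon_0\,\phi(0,t)$, and the square gives
\[
\frac{d}{dt}\phi(0,t)\ \ge\ \tfrac{\epsilon_0^2}{2}\,\phi(0,t)^2
\]
for \emph{every} $\epsilon_0>0$, with a constant independent of $t$. This is a clean Riccati inequality and blowup follows immediately. The two missing ideas are thus: (i) the algebraic identity that produces the perfect square $\tfrac12(\phi-\psi)^2(0,t)$ rather than the difference $\tfrac12\phi^2-I$; and (ii) pairing Lemma~\ref{lem100a} with the conserved $H^1$ energy of $u$, not with $\|\phi\|_{L^2}$.
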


The proof of Theorem \ref{thm1b} relies on the following lemma which can be regarded as some type of
Poincar\'e inequality.

\begin{lem} \label{lem100}
Let the dimension $d\ge 1$.
For any $C_1>0$, $1\le p <\infty$, there is a constant $\epsilon_0>0$ depending only
on $C_1$, $p$ and the dimension $d$ such that the following holds:

Suppose $f:\mathbb R^d\to \mathbb R$ is a function (not necessarily radial) such that
\begin{align*}
0\le f(x) \le f(0), \qquad\forall \, x\in \mathbb R^d.
\end{align*}

 If $\| f\|_{L_x^p}
\le C_1 |f(0) |<\infty$, then

\begin{align}
\Bigl| (\frac {\Delta} {1-\Delta} f )(0) \Bigr| \ge \epsilon_0 |f(0)|. \label{500a}
\end{align}

\end{lem}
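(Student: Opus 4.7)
The plan is to exploit the elementary decomposition
\[
\frac{\Delta}{1-\Delta} = -\operatorname{Id} + (1-\Delta)^{-1},
\]
so that $(\frac{\Delta}{1-\Delta} f)(0) = -f(0) + \int_{\mathbb R^d} G(y) f(y)\,dy$, where $G$ is the Bessel kernel of order $2$ (i.e.\ the convolution kernel with Fourier symbol $(1+|\xi|^2)^{-1}$). The three standard facts I would rely on are: $G$ is pointwise positive; $G$ is radial and strictly radially decreasing; and $\int_{\mathbb R^d} G\,dy = \widehat G(0)=1$. Combined with $0\le f\le f(0)$, this immediately gives $\int G f \le f(0)$, so $(\frac{\Delta}{1-\Delta}f)(0)\le 0$; the content of the lemma is the quantitative improvement.

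Next, introduce the normalized deficit $g(y):=1-f(y)/f(0)\in[0,1]$ with $g(0)=0$. The goal becomes
\[
\int_{\mathbb R^d} G(y) g(y)\,dy \ge \epsilon_0,
\]
and the key is to show $g$ is bounded below on the bulk of $G$'s mass. Define the superlevel set $A:=\{y:f(y)\ge f(0)/2\}=\{y:g(y)\le 1/2\}$. On $A$ we have $f\ge f(0)/2$, so the $L^p$-hypothesis forces the measure bound
\[
|A|\le \Bigl(\frac{2\|f\|_{L^p}}{f(0)}\Bigr)^p \le (2C_1)^p =: M.
\]

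Since $G$ is radially decreasing, the bathtub principle gives $\int_A G \le \int_{B_R} G$, where $B_R$ is the ball of volume $M$ centered at the origin (so $R$ depends only on $C_1,p,d$). Because $G>0$ everywhere and $\int_{\mathbb R^d}G=1$, we get the strict inequality $\int_{B_R}G<1$, hence
\[
\int_{A^c} G \;\ge\; 1-\int_{B_R}G \;=:\; 2\epsilon_0 \;>\;0,
\]
with $\epsilon_0=\epsilon_0(C_1,p,d)$. Finally, since $g\ge 1/2$ on $A^c$,
\[
\int_{\mathbb R^d} G\,g \;\ge\; \int_{A^c} G\,g \;\ge\; \tfrac12\int_{A^c} G \;\ge\; \epsilon_0,
\]
which yields \eqref{500a}.

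The only genuine obstacle is the strict positivity $\int_{B_R} G<1$; this is routine given the explicit Bessel-potential representation (and in particular the pointwise positivity and full $L^1$-normalization of $G$), but I would note that it is precisely the step that would fail if one attempted to replace $(1-\Delta)^{-1}$ by an operator whose kernel is compactly supported or sign-changing. The argument is uniform in the dimension $d\ge 1$, consistent with the lemma's statement.
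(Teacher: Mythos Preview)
Your argument is correct and complete: the decomposition $\frac{\Delta}{1-\Delta}=-\operatorname{Id}+(1-\Delta)^{-1}$, the Chebyshev bound $|A|\le(2C_1)^p$, the bathtub (Hardy--Littlewood) inequality $\int_A G\le\int_{B_R}G$, and the strict positivity of $G$ on $\mathbb R^d\setminus B_R$ combine exactly as you say to give the quantitative lower bound with an explicit $\epsilon_0=\tfrac12\bigl(1-\int_{B_R}G\bigr)$ depending only on $(C_1,p,d)$.

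Your route is genuinely different from the paper's. The paper starts from the same identity $\bigl(\tfrac{-\Delta}{1-\Delta}f\bigr)(0)=\int K(y)\bigl(f(0)-f(y)\bigr)\,dy$ but then argues by contradiction and soft compactness: normalizing $f_n(0)=1$, it extracts a weak $L^r$-limit (for $r>p$ chosen so that $K\in L^{r'}$), passes to the limit in $\int K(1-f_n)\to 0$, and reaches the contradiction that the weak limit equals $1$ a.e.\ yet lies in $L^r$. The paper also records, in a remark, a hands-on estimate in $d=3$ that splits the convolution at a radius $R$ and optimizes; your bathtub argument is in the same constructive spirit but cleaner and dimension-independent. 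The trade-off is that your Chebyshev step uses the $L^p$ bound on $f$ directly, whereas the paper's compactness scheme is more flexible---in particular it adapts with little change to the companion Lemma where only $\bigl\|\tfrac{|\nabla|}{1+|\nabla|}f\bigr\|_{L^2}$ is controlled, a hypothesis your level-set argument would not handle without additional input.
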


\begin{proof}[Proof of Lemma \ref{lem100}]
WLOG we may assume $f(0)=1$.  Denote the Bessel potential $K(x) = \mathcal F^{-1} ( (1+|\xi|^2)^{-1} ) (x)$.
Note that $K$ is a positive radial function on $\mathbb R^d$ and $K\in L_x^1 \cap L_x^q$ for any $1<q<\frac {d}{d-2}$
(for $d=2$ we have $K\in L_x^1 \cap L_x^{q}$ for any $q<\infty$, and for $d=1$ we have $K\in L_x^1\cap L_x^\infty$).
Then
\begin{align}
\Bigl( \frac {-\Delta}{1-\Delta} f \Bigr)(0) & = f(0) -  ( \frac 1 {1-\Delta} f )(0) \notag \\
& = \int_{\mathbb R^d} K(y) (f(0) - f(y) ) dy. \notag
\end{align}

Assume the bound \eqref{500a} is not true. Then there exists a sequence
of nonnegative  functions
$f_n$ such that  $f_n(0)=1$,  $\| f_n\|_{L_x^{\infty}} \le 1$, $\| f_n \|_{L_x^p} \le C_1$ and
\begin{align}
\int_{\mathbb R^d} K(y) (1- f_n(y) ) dy \to 0. \label{limit100}
\end{align}

Now take a number $r>p$  sufficiently large such that
$K \in L_x^{\frac r {r-1}}$. Obviously $\| f_n \|_{L_x^r} \le C_2<\infty$ for some
constant $C_2>0$ independent of $n$.  By passing to a subsequence in $n$ if necessary, we have $f_n \rightharpoonup g$
weakly in $L_x^r$ for some $g\in L_x^r$. Furthermore we have $\| g\|_{L_x^{\infty}} \le 1$.  By \eqref{limit100} and the fact
$K \in L_x^{\frac r {r-1}}$, we then obtain
\begin{align*}
\int_{\mathbb R^d} K(y) (1- g(y)) dy =0
\end{align*}
which implies $g(y)=1$ for a.e. $y\in \mathbb R^d$. This clearly contradicts to the fact that $g\in L_x^r$. The lemma is proved.
\end{proof}

\begin{rem}
It is also possible to give a constructive proof of Lemma \ref{lem100}.
For example in the 3D case, we have $K(x)=\mathcal F^{-1}((1+|\xi|^2)^{-1})(x)=\frac{1}{4\pi}\frac{e^{-|x|}}{|x|}$.
Let $p^{\prime}=\frac{p}{p-1}$ be the usual H\"older conjugate of $p$. Let $R>0$ be a number whose
value will be chosen later. Then we have
\begin{align}
\int_{\mathbb{R}^3} K(y)f(y) dy & =  \int_{|y|<R} K(y) f(y) dy + \int_{|y|>R} K(y) f(y) dy \notag \\
      & \le \left\| f\right\|_{L^\infty} \int_0^R re^{-r}dr
             +\left(\int_{|y|>R} K^{p'} dy \right)^{1/p'} \left\| f\right\|_{L^p} \notag \\
      & \le \left(1-(R+1)e^{-R}\right)\left\| f\right\|_{L^\infty} \notag \\
         &\qquad      +\left(\frac{1}{(4\pi)^{p'-1}}\int_R^\infty e^{-p'r}r^{2-p'}dr\right)^{1/p'}
                       \left\| f\right\|_{L^p}. \label{rem100_e22}
\end{align}
To estimate \eqref{rem100_e22}, we compute (note that $p'>1$, and
assume that $R>1$)
\begin{align}
\int_R^\infty e^{-p'r}r^{2-p'}dr &\le
                R^{1-p'} e^{(1-p')R} \int_R^\infty e^{-r}r dr \notag \\
        &= R^{1-p'} e^{(1-p')R} (R+1)e^{-R} \notag\\
        &\le C R^{2-p'}e^{-p'R}. \notag
\end{align}
Plugging the above estimate into \eqref{rem100_e22}, we have
\begin{align}
\int_{\mathbb{R}^3} K(y)f(y) dy \leq \left(1-(R+1)e^{-R}\right)\left\| f\right\|_{L^\infty}
                + C R^{(2-p')/p'}e^{-R}\left\| f\right\|_{L^p}. \notag
\end{align}
Since $p'>1$ we have $(2-p')/p'<1$.
Now if there is a constant $C_1$ such that $\left\|f\right\|_{L^p}\leq C_1 \left\|f\right\|_{L^\infty}$
holds, then we can always choose $R$ big enough to obtain
\begin{align}
\int_{\mathbb{R}^3} K(y)f(y) dy \leq (1-\varepsilon_0) \left\|f\right\|_{L^\infty}. \notag
\end{align}
This then leads to \eqref{500a}.
\end{rem}

For the proof of Theorem \ref{thm1b}, we need a slightly modified version of Lemma \ref{lem100}.
Note the dimension restriction $d\ge 3$ and see also Remark \ref{rem_lem100a} below.
\begin{lem} \label{lem100a}
Let the dimension $d\ge 3$.
For any $C_1>0$, $1\le p <\infty$, there is a constant $\epsilon_0>0$ depending only
on $C_1$, $p$ and the dimension $d$ such that the following holds:

Suppose $f:\mathbb R^d\to \mathbb R$ satisfies
\begin{align*}
0\le f(x) \le f(0), \quad\forall\, x \in \mathbb R^d.
\end{align*}

 If $f\in L_x^2(\mathbb R^d)$ and
 $$\Bigl\| \frac{|\nabla|}{1+|\nabla|} f \Bigr\|_{L_x^2}
\le C_1 |f(0) |<\infty,$$ then

\begin{align}
\Bigl| (\frac {\Delta} {1-\Delta} f )(0) \Bigr| \ge \epsilon_0 |f(0)|. \label{lem100a_1}
\end{align}

\end{lem}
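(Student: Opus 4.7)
The plan is to follow the compactness and contradiction strategy of Lemma \ref{lem100}, but to extract usable bounds from the weaker hypothesis via a Littlewood-Paley split combined with Sobolev embedding.

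First I would normalize to $f(0)=1$ and rewrite
\[
\Bigl(\tfrac{-\Delta}{1-\Delta} f\Bigr)(0) \;=\; \int_{\mathbb R^d} K(y)\bigl(1-f(y)\bigr)\,dy,
\]
where $K$ is the strictly positive Bessel kernel, so the lemma reduces to producing a uniform lower bound for this integral. Arguing by contradiction, I would assume there is a sequence $\{f_n\}$ with $f_n(0)=1$, $0\le f_n\le 1$, $\|\tfrac{|\nabla|}{1+|\nabla|}f_n\|_{L^2}\le C_1$, and $\int K(y)(1-f_n(y))\,dy\to 0$. Since $K>0$ pointwise and $1-f_n\ge 0$, passing to a subsequence one obtains $f_n\to 1$ a.e., and therefore $f_n\to 1$ in $L^1(B_R)$ for any fixed $R$ by dominated convergence.

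The heart of the argument is to convert the hypothesis into two separate frequency-localized bounds. Since the multiplier $|\xi|/(1+|\xi|)$ is comparable to $|\xi|$ on $\{|\xi|\le 1\}$ and to $1$ on $\{|\xi|\ge 1\}$, one reads off
\[
\|\nabla P_{<1} f_n\|_{L^2}\lesssim C_1,\qquad \|P_{\ge 1} f_n\|_{L^2}\lesssim C_1.
\]
Here I would invoke the homogeneous Sobolev embedding $\dot H^1(\mathbb R^d)\hookrightarrow L^{2d/(d-2)}(\mathbb R^d)$, available precisely for $d\ge 3$, to upgrade the low-frequency bound to $\|P_{<1}f_n\|_{L^{2d/(d-2)}}\lesssim C_1$.

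To close the contradiction, I would integrate over a large ball. By Hölder,
\[
\int_{B_R} f_n\,dx \;\le\; \|P_{<1} f_n\|_{L^{2d/(d-2)}}|B_R|^{(d+2)/(2d)} + \|P_{\ge 1} f_n\|_{L^2}|B_R|^{1/2} \;\lesssim\; R^{(d+2)/2}+R^{d/2},
\]
uniformly in $n$. On the other hand, $\int_{B_R} f_n \to |B_R|\sim R^d$. Since $(d+2)/2<d$ and $d/2<d$ for $d\ge 3$, picking $R$ large enough contradicts the uniform upper bound. The main obstacle, and the structural reason for the restriction $d\ge 3$, is precisely the failure of $\dot H^1\hookrightarrow L^{2d/(d-2)}$ in dimension two: without the low-frequency $L^{2d/(d-2)}$ control the large-ball integration cannot defeat $|B_R|\sim R^d$, which is the very gap alluded to in Remark \ref{rem_lem100a}.
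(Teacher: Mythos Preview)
Your argument is correct, and it takes a genuinely different route from the paper's proof.

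Both proofs open the same way: normalize $f(0)=1$, rewrite $(\tfrac{-\Delta}{1-\Delta}f)(0)=\int K(1-f)$, and suppose there is a sequence $f_n$ driving this integral to zero. From there the two diverge. The paper passes to a weak $L^2$ limit of $\tfrac{|\nabla|}{1+|\nabla|}f_n$ and exploits the fact that for $d\ge 3$ the inverse multiplier $\tfrac{1+|\nabla|}{|\nabla|}$ sends Schwartz functions into $L^2$; this identifies a weak-$*$ limit $f_\infty$ of the $f_n$, which is forced to equal the constant $1$, and a Fourier-side computation shows this is incompatible with the $L^2$ bound on the limit object. Your approach bypasses all of this soft analysis: you convert the hypothesis into the two concrete bounds $\|\nabla P_{<1}f_n\|_{L^2}\lesssim 1$ and $\|P_{\ge 1}f_n\|_{L^2}\lesssim 1$, feed the first into the homogeneous Sobolev embedding $\dot H^1\hookrightarrow L^{2d/(d-2)}$ (this is where $d\ge 3$ enters for you), and then finish with a volume-growth comparison on large balls. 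The advantage of your route is that it is essentially elementary and in principle quantitative; the paper's weak-compactness argument is cleaner in a functional-analytic sense but inherently nonconstructive. Both locate the dimension restriction in the same place --- the failure of a scale-$0$ Sobolev-type control in $d\le 2$ --- which is exactly the content of Remark~\ref{rem_lem100a}.

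One small point worth making explicit when you write this up: the homogeneous Sobolev inequality $\|u\|_{L^{2d/(d-2)}}\lesssim\|\nabla u\|_{L^2}$ can fail for general tempered distributions with $\nabla u\in L^2$ (e.g.\ constants), so you are implicitly using that $P_{<1}f_n\in L^2$ (hence $P_{<1}f_n\in H^1$) to justify applying it. This is fine since $f_n\in L^2$ by hypothesis, but it is worth a sentence.
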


\begin{rem} \label{rem_lem100a}
We stress that the dimension restriction $d\ge 3$ is necessary in Lemma \ref{lem100a}.
In dimensions $d=1,2$, there exist counterexamples which are made of approximating sequences
of the constant functions. To see this, let $t>0$ and define
\begin{align*}
f_t(x) = e^{-t |x|^2}, \quad\, x \in \mathbb R^d.
\end{align*}
Then obviously $f_t(0)=1$ and
\begin{align*}
\hat f_t(\xi) = const  \cdot t^{-\frac d2} e^{-\frac{|\xi|^2}{4t}}, \quad \xi \in \mathbb R^d.
\end{align*}
When $d=1,2$, it is not difficult to check that
\begin{align*}
&\Bigl\| \frac {|\nabla|} {1+|\nabla|} f_t(x) \Bigr\|_{L_x^2}^2  \notag \\
\lesssim & \int_{\mathbb R^d} \frac {|\xi|^2} {1+|\xi|^2} \cdot t^{-d} e^{-\frac {|\xi|^2}{2t}} d \xi\\
\lesssim & t^{1-\frac d2} \lesssim 1, \quad \text{as $t\to 0$.}
\end{align*}
Similarly we have
\begin{align*}
 &\Bigl| (\frac {\Delta}{1-\Delta} f_t)(0) \Bigr| \notag \\
\lesssim & \int_{\mathbb R^d} \frac {|\xi|^2}{1+|\xi|^2} \cdot t^{-\frac d2} e^{-\frac{|\xi|^2}{4t}} d\xi \notag \\
\lesssim &\, t^{2} \to 0,\quad \text{as $t\to 0$}.
\end{align*}
Obviously \eqref{lem100a_1} cannot hold in this case.
\end{rem}

\begin{proof}[Proof of Lemma \ref{lem100a}]
Again we will argue by contradiction. Assume \eqref{lem100a_1} does not hold. Then we can find
a sequence of  nonnegative functions $f_n \in L_x^2(\mathbb R^d)$ with $\|f_n\|_{\infty}=f_n(0)=1$ such that
\begin{align}
\Bigl \| \frac {|\nabla|}{1+|\nabla|} f_n \|_{L_x^2} \le C_1, \label{lem100a_2}
\end{align}
and
\begin{align}
\int_{\mathbb R^d} K(y) (1-f_n(y))dy \to 0, \quad \text{as $n\to \infty$}. \label{lem100a_3}
\end{align}
By \eqref{lem100a_2} and passing to a subsequence in $n$ if necessary, we can find $g \in L_x^2(\mathbb R^d)$ such that
\begin{align*}
\frac{|\nabla|}{1+|\nabla|} f_n \rightharpoonup g, \quad \text{weak in $L^2_x(\mathbb R^d)$, as $n\to \infty$}.
\end{align*}

Now for any $\phi \in \mathcal S(\mathbb R^d)$, observe that
\begin{align*}
\frac{1+|\nabla|}{|\nabla|} \phi \in L_x^2(\mathbb R^d), \quad \text{for $d\ge 3$}.
\end{align*}

Therefore
\begin{align}
\int_{\mathbb R^d} f_n \phi dx & = \int_{\mathbb R^d} \frac{|\nabla|}{1+|\nabla|} f_n
\cdot \frac{1+|\nabla|}{|\nabla|} \phi dx \notag \\
& \to \int_{\mathbb R^d} g \cdot \frac{1+|\nabla|}{|\nabla|} \phi dx \notag \\
&=:\; T(\phi), \quad \text{as $n\to \infty.$} \label{you_1}
\end{align}

Since $0\le f_n \le 1$ and
\begin{align*}
T(\phi) = \lim_{n\to \infty} \int_{\mathbb R^d} f_n \phi dx,
\end{align*}
it follows that for $\phi\ge 0$, we have $T(\phi)\ge 0$. Therefore by the Riesz representation
theorem, we have
\begin{align*}
T(\phi) = \int_{\mathbb R^d} \phi d\mu,
\end{align*}
for some non-negative Borel measure $d\mu$.  Now since
\begin{align*}
\left|\int_{\mathbb R^d} f_n \phi dx \right| \le \| \phi \|_{L_x^1(\mathbb R^d)},
\end{align*}
we get
\begin{align*}
\left| \int_{\mathbb R^d} \phi d\mu \right| \le \| \phi \|_{L_x^1(\mathbb R^d)}, \quad \forall\,\phi \in
\mathcal S(\mathbb R^d).
\end{align*}

Therefore in a standard way we can extend $d\mu \in (L_x^1)^*= L_x^{\infty}$. Hence for some $f_{\infty} \in
L_x^{\infty}(\mathbb R^d)$ with $0\le f_n(x) \le 1$, a.e. $x\in \mathbb R^d$, we have
\begin{align*}
T(\phi) = \int_{\mathbb R^d} \phi(x) f_{\infty}(x) dx.
\end{align*}

By a density argument, we obtain
\begin{align*}
\lim_{n\to \infty} \int_{\mathbb R^d} f_n \phi dx = \int_{\mathbb R^d} f_{\infty}  \phi dx,
\quad \forall\, \phi \in L_x^1(\mathbb R^d).
\end{align*}

In particular,
\begin{align*}
\lim_{n \to \infty} \int_{\mathbb R^d} K(x) f_n(x) dx = \int_{\mathbb R^d} K(x) f_{\infty}(x) dx.
\end{align*}

Therefore by \eqref{lem100a_3}
\begin{align*}
\int_{\mathbb R^d} K(x) (1-f_{\infty}(x)) dx =0,
\end{align*}
and obviously $f_{\infty}(x)=1$ for a.e. $x\in \mathbb R^d$.

Plugging this back into \eqref{you_1},  we obtain for any $\phi \in \mathcal S(\mathbb R^d)$,
\begin{align*}
\int_{\mathbb R^d} g \cdot \frac{1+|\nabla|}{|\nabla|} \phi dx= \int_{\mathbb R^d} \phi dx,
\end{align*}
or on the Fourier side,
\begin{align*}
\int_{\mathbb R^d} \hat g(\xi) \cdot \frac{1+|\xi|}{|\xi|} \hat \phi(\xi) d\xi = \hat \phi(0).
\end{align*}

From this and the fact that $\hat g \in L^2$, it follows easily that $\hat g(\xi)=0$ for a.e. $\xi \in \mathbb R^d$.
This is obviously a contradiction.

\end{proof}

We are now ready to complete the

\begin{proof}[Proof of Theorem \ref{thm1b}]
Denote $g= (1-\Delta)^{-1} \phi$. Set $r=0$ in \eqref{102a}, we then rewrite it as
\begin{align}
\frac d{ dt} \phi(0,t) & = \int_0^{\infty} \Bigl( (1-\Delta) g
\Bigr)^{\prime} \Delta g dr \notag \\
& = \int_0^{\infty} g^{\prime} \Delta g dr - \int_0^{\infty} (\Delta
g)^{\prime} \Delta g dr \notag \\
& = \int_0^{\infty} g^{\prime} ( g^{\prime\prime} + \frac {d-1} r
g^{\prime} ) dr  + \frac 1 2 \Bigl ( (\Delta g)(0,t) \Bigr)^2.
\label{thm1b_202}
\end{align}
Now note that $g$ is a radial function, so $g^{\prime}(0,t)=0$. Therefore we have
\begin{align*}
\int_0^{\infty} g^{\prime} g^{\prime\prime} dr =0.
\end{align*}
Therefore we obtain from \eqref{thm1b_202} the following inequality
\begin{align}
\frac d {dt} \phi(0,t) &\ge \frac 12 \Bigl( (\Delta g) (0,t)
\Bigr)^2 \notag \\
& =\frac 12 \Bigl(  \bigl( \frac{\Delta}{1-\Delta} \phi \bigr)(0,t) \Bigr)^2. \label{thm1b_510}
\end{align}

Now by using the energy conservation \eqref{conserv} and the relation $u=(1-\Delta)^{-1} \nabla \phi$, we have
\begin{align*}
\left\| \frac {|\nabla|} {1+|\nabla|} \phi(\cdot, t) \right\|_{L_x^2} \le C_3 <\infty,
\end{align*}
where $C_3$ is some constant independent of $t$.

Note that $\phi(0,t) \ge \phi(0,0) >0$. Since we assume the dimension $d\ge 3$,  by Lemma \ref{lem100a}, we have
\begin{align}
\Bigl|  (\frac{\Delta} {1-\Delta} \phi)(0,t ) \Bigr| \ge \epsilon_0  \phi(0,t), \notag
\end{align}
where $\epsilon_0>0$ is independent of $t$.

Plugging this estimate into \eqref{510}, we obtain
\begin{align*}
\frac d {dt} \phi(0,t) \ge \frac 12 \epsilon_0^2 \phi(0,t)^2
\end{align*}
which clearly implies that $\phi(0,t)$ must blow up in finite time.
\end{proof}

\section{proof of main theorems}

\subsection{Proof of Theorem \ref{thm_blowup_main}}
 Write \eqref{106} as
\begin{align}
\frac {d}{dt} \phi(0,t) &= \int_0^{\infty} \phi^{\prime} \Bigl(
(1-\Delta)^{-1} -1 \Bigr) \phi ds \notag \\
& = \int_0^{\infty} \phi^{\prime} \frac{\Delta}{1-\Delta} \phi ds.
\label{200}
\end{align}
Now denote $g= (1-\Delta)^{-1} \phi$. We then rewrite \eqref{200} as
\begin{align}
\frac {d} { d t} \phi(0,t) & = \int_0^{\infty} \Bigl( (1-\Delta) g
\Bigr)^{\prime} \Delta g dr\notag \\
& = \int_0^{\infty} g^{\prime} \Delta g dr - \int_0^{\infty} (\Delta
g)^{\prime} \Delta g dr \notag \\
& = \int_0^{\infty} g^{\prime} ( g^{\prime\prime} + \frac {d-1} r
g^{\prime} ) dr + \frac 1 2 \Bigl ( (\Delta g)(0,t) \Bigr)^2.
\label{202}
\end{align}
Now note that $g^{\prime}(0,t)=0$ and we have
\begin{align*}
\int_0^{\infty} g^{\prime} g^{\prime\prime} dr =0.
\end{align*}
Therefore we obtain from \eqref{202} the following identity
\begin{align}
\frac d {dt} \phi(0,t) &= (d-1) \int_0^{\infty} \frac {(g^\prime)^2} r dr +  \frac 12 \Bigl( (\Delta g) (0,t)
\Bigr)^2 \notag \\
& = (d-1) \int_0^{\infty} \frac {(g^\prime)^2} r dr +
 \frac 12 \Bigl(  \bigl( \frac{\Delta}{1-\Delta} \phi \bigr)(0,t) \Bigr)^2 \notag \\
& = (d-1) \int_0^{\infty} \frac {(g^{\prime})^2} r dr
 + \frac 12 \Bigl( \phi(0,t) -g (0,t)  \Bigr)^2. \label{510}
\end{align}

Since $\phi_0(0)\ge 0$ and $\phi_0$ is not identically zero, we have that for all $t\ge t_0$,
\begin{align}
\phi(0,t) \ge A_1,  \label{510a}
\end{align}
where  $t_0>0$ is any fixed time and $A_1$ is a constant depending on $\phi_0$ and $t_0$.

Now let $R>1$ be a parameter whose value will be specified later. Note that by the Fundamental Theorem
of Calculus, we have
\begin{align}
|g(0,t) - g(R,t)| & \le \int_0^R | g^\prime | dr \notag \\
& \le \Bigl( \int_0^R \frac {(g^\prime)^2} {r} dr \Bigr)^{\frac 12} \cdot R. \label{540}
\end{align}

Then clearly for dimensions $d\ge 2$,
\begin{align}
\eqref{510} & \ge \frac 1 {100 R^2} \Bigl( \left|\phi(0,t) -g(0,t)\right| + R \bigl( \int_0^R \frac{ (g^\prime)^2} r dr \bigr)^{\frac 12} \Bigr)^2\\
& \ge  \frac{1}{100 R^2} \Bigl( \left|\phi(0,t) -g(0,t)\right| + |g(0,t) - g(R,t)| \Bigr)^2\\
& \ge   \frac{1}{100 R^2}\Bigl( \phi(0,t) -g(R,t) \Bigr)^2
\label{520}
\end{align}

Now we discuss two cases. Consider first the case dimension $d\ge
3$. By  radial Sobolev embedding and energy conservation
\eqref{conserv}, we have
\begin{align}
|g(R,t)| & \le C_d \| \nabla g \|_2 \cdot R^{-\frac{d-2}2} \notag \\
& \le C_d { \| u_0\|_{H^1} } \cdot R^{-\frac{d-2}2}, \label{560}
\end{align}
where $C_d$ is constant depending only on the dimension $d$, and $u_0=(1-\Delta)^{-1}\nabla \phi_0$
is the initial velocity.  By \eqref{560}, we can choose
$R$ sufficiently large such that
\begin{align}
|g(R,t)| \le \frac 1 {100}A_1, \label{570}
\end{align}
where $A_1$ was defined in \eqref{510a}.  Therefore  by \eqref{520}, \eqref{560}, and \eqref{570}, we get for all $t>t_0$,
\begin{align}
    \phi(0,t) - g(R,t) \ge & \frac 12 \phi(0,t). \notag
 \end{align}

Plugging this estimate into \eqref{520}, we obtain for $t>t_0$, and some constant $\epsilon_0>0$,
\begin{align*}
\frac d {dt} \phi(0,t) \ge \frac 12 \epsilon_0 \phi(0,t)^2
\end{align*}
which together with \eqref{510a} clearly implies that $\phi(0,t)$ must blow up in finite time.

This finishes the case $d\ge 3$. Now we turn to the case dimension $d=2$. We shall choose for
each $g(t)$ the time-dependent parameter $R(t)=R_0(1+t)^{\frac 12}$ where $R_0$ will be taken sufficiently large.
By \eqref{hk_1} and radial Sobolev embedding, we have
\begin{align*}
|g(R(t),t)| & \le C \cdot \| \phi(t) \|_2^{\frac 12} \cdot (R(t))^{-\frac 12} \notag \\
& \le C\cdot B_1 \cdot R_0^{-\frac 12}.
\end{align*}

Choosing $R_0$ sufficiently large gives us \eqref{570} and consequently
\begin{align*}
\frac d {dt} \phi(0,t) \ge C \cdot \frac 1 {1+t} \phi(0,t)^2.
\end{align*}

Integrating the above ODE on the interval $[t_0,\tau)$ with $\tau>t_0$, we get
\begin{align*}
- \frac 1 {\phi(0,\tau)} + \frac 1 {\phi(0,t_0)} \ge const \cdot \log (1+\tau).
\end{align*}

This implies that $\frac 1 {\phi(0,\tau)}$ become negative in finite time which obviously
contradicts \eqref{510a}.

\subsection{Proof of Theorem \ref{thm2}}

By repeating an argument similar to the beginning part of the proof of Theorem \ref{thm1a},
we have $\phi^{\prime}(r,t)\ge 0$ for any $r>0$. Set $\phi=-\psi$. Then by \eqref{106},
we have
\begin{align*}
\frac d {dt} \psi(0,t)= - \frac {\psi(0,t)^2}2
- \int_0^{\infty} \psi^{\prime}(r,t)
((1-\Delta)^{-1} \psi)(r,t) dr.
\end{align*}

By a derivation similar to \eqref{hk_thm1a_2}, we then have for any $t>0$,
\begin{align*}
\| \psi(t) \|_{L^2(\mathbb R^d)} = \| \phi(t) \|_{L^2(\mathbb R^d)} \le C\cdot (1+t),
\end{align*}
where $C>0$ depends only on $\phi_0$.

Therefore in place of \eqref{hk_thm1a_3}, we get
\begin{align*}
\frac d {dt} \psi(0,t) \le - \frac{\psi(0,t)^2}4 + C \cdot (1+t) \cdot \psi(0,t).
\end{align*}

Since $\psi(0,0)\ge 0$, this clearly shows that $\psi(0,t)$ is bounded for all $t>0$.
By using the blowup criteria Lemma \ref{lem2new}, we conclude that the corresponding
classical solution exists for all time $t>0$.

\subsection{Proof of Corollary \ref{cor_1}}

The monotonicity of $\phi(0,t)$ follows directly from the proof of Theorem \ref{thm_blowup_main} (see \eqref{510}).
In particular we know
that $\phi(0,t)<0$ for any $t\ge 0$ (otherwise the corresponding solution will blow up). It remains to establish the
estimates \eqref{est_500}--\eqref{est_500a}.
By using the same argument as in the proof of Theorem \ref{thm_blowup_main}, we obtain the inequality
\begin{align*}
\frac d {dt} \phi(0,t) &\ge \epsilon_0 \phi(0,t)^2, \quad \text{if $d\ge 3$}, \\
\frac d {dt} \phi(0,t) & \ge \frac{\epsilon_1}{1+t} \phi(0,t)^2,\quad
\text{if $d=2$,}
\end{align*}
where $\epsilon_0>0$, $\epsilon_1>0$ are some constants.
Integrating the above inequality in time gives us the desired results.

\subsection{Proof of Theorem \ref{thm4}}
Let $\psi_0 \in H^{\infty} (\mathbb R^d) $ be a smooth radial
function such that $\psi_0(0)=0$ and
\begin{align} \label{cond_800}
\begin{cases}
\psi_0^{\prime}(x)\le 0, \quad \, |x| \le c_1,\\
\psi_0^{\prime}(x)> 0, \quad \, |x|>c_2, \\
\psi_0(x)<0, \quad \, c_1/2<|x|<2c_2.
\end{cases}
\end{align}
Here $0<c_1<c_2<\infty$ are arbitrary constants.

By local wellposedness theory, there exists $T_0>0$ and a smooth solution $\psi=\psi(x,t)$ to \eqref{1} ($m=\nabla \psi$)
in the space $C([-T_0,T_0], H^{k})$ for any $k\ge 0$.

We make the following

\textbf{Claim}: there exists $t_0>0$ sufficiently small, such that
$\psi(x,-t_0)<0$ for any $x\in \mathbb R^d$.

We now assume the claim is true and complete the proof of the
theorem. Take $\phi_0(x):= \psi(x,-t_0)$ for $x\in \mathbb R^d$.
Then clearly $\phi_0(x)$ satisfies all the needed conditions.
Furthermore denote the solution corresponding to the data $\phi_0$
as $\phi=\phi(x,t)$. It is obvious that $\phi(x,t)= \psi(x, t-t_0)$
for any $t\ge 0$. In particular we have $\phi(0, t_0)= 0$. By using
 Theorem \ref{thm_blowup_main}, it follows easily that $\phi$ must
blow up at some $t>t_0$. Therefore $\phi_0$ is the desired initial
data.

It remains for us to prove the claim. Write $\psi=\psi(r,t) = \psi(x,t)$, where $r=|x|$.
Note that $\psi \in C^{\infty}([0,\infty))$ as a function of $r$.  We can perform an even extension and regard
$\psi \in C^{\infty}(\mathbb R)$.

By  \eqref{102}, we have
\begin{align}
-\partial_t \psi^{\prime} & = \Bigl( - \psi +  (1-\Delta)^{-1} \psi
+ \left( (1-\Delta)^{-1} \psi \right )^{\prime\prime} \Bigr)\psi^{\prime} + ( (1-\Delta)^{-1} \psi)^{\prime} \cdot \psi^{\prime\prime}
\notag \\
& =: c(r,t) \psi^{\prime} + b(r,t) \cdot \psi^{\prime\prime}. \notag
\end{align}
Here $c=c(r,t)$, $b=b(r,t)$ are both smooth functions for $-T_0\le t \le T_0$ and $ r \in \mathbb R$.  Note that $c$ is an even function
and $b$ is an odd function. Also for some constant $B>0$
\begin{align} \label{805}
\sup_{|t| \le T_0} ( \| b(t,\cdot)\|_{\infty} +\| \partial_r b(t,\cdot) \|_{\infty} ) \le B <\infty.
\end{align}

Denote $f(r,t) =\psi^{\prime}(r,t)$, then $f(r,t)$ satisfies the transport
equation
\begin{align} \label{807}
\partial_t f + b \partial_r f + c f =0.
\end{align}

Introduce the characteristic lines:
\begin{align*}
\begin{cases}
\frac d{ dt} z(t, \alpha) = b(z(t,\alpha), t), \\
z(0,\alpha) = \alpha \in \mathbb R.
\end{cases}
\end{align*}
For each $-T_0\le t \le T_0$, the map $\alpha \rightarrow z(t,\alpha)$ is a smooth diffeomorphism.  Furthermore we have the obvious estimate
\begin{align}
|z(t,\alpha)- \alpha| \le t B, \label{808}
\end{align}
where $B>0$ is the same constant as in \eqref{805}. By integrating \eqref{807} along the characteristic line, we have
\begin{align} \label{809}
f(z(t,\alpha),t) = f(\alpha,0) \exp\left( -\int_0^t c(z(\alpha,s),s) ds \right), \qquad \forall\, \alpha \in \mathbb R, \, t\in[-T_0,T_0].
\end{align}

Now take $t_1$ sufficiently small such that (see \eqref{cond_800} for the definition of the constant $c_1$)
\begin{align*}
t_1 \le \min\{ \frac {c_1} {8B}, \; T_0 \}.
\end{align*}

By \eqref{808},  if  $|t|\le t_1$ and $|z(t,\alpha)| \le \frac {c_1} 2$, then obviously $|\alpha| \le {c_1}$. By \eqref{809}, \eqref{cond_800}, we conclude
that
\begin{align}
\psi^{\prime}(r,t) = f(r,t) \le 0, \qquad \forall\, |t|\le t_1, \; r\le \frac {c_1}2. \label{810}
\end{align}

By using a similar argument, we also obtain
\begin{align}
\psi^{\prime}(r,t) > 0, \qquad \forall\, |t| \le t_1, \; r\ge 2c_2, \label{812}
\end{align}

By \eqref{510} and the fact that $\psi_0(0)=0$, we obtain
$\psi(0,t)<0$ for all $t \in [-T_0,0)$. It follows from \eqref{810}
that
\begin{align}
\psi(r,t)<0, \qquad\forall\, -t_1\le t<0, \; r\le \frac{c_1}2.
\end{align}
Similarly using the fact that $\psi(\infty, t)=0$ and \eqref{812}, we obtain
\begin{align}
\psi(r,t)<0, \qquad \forall\, -t_1\le t<0,\; r \ge 2c_2.
\end{align}

By \eqref{cond_800} and smoothness of the local solution, there exist some $t_2>0$ sufficiently small such that
\begin{align*}
\psi(r,t)<0, \qquad \forall\;  |t|\le t_2,\, \frac {c_1}2 \le r \le 2c_2.
\end{align*}

Now obviously the claim follows if we take $t_0 =  \min\{t_1,t_2\}$.


\frenchspacing
\bibliographystyle{plain}

\end{document}